\newcommand{\TheTitle}{Weak, Strong and Linear Convergence of a Double-Layer Fixed Point Algorithm}
\newcommand{\TheAuthors}{V. I. Kolobov, S. Reich and R. Zalas}
\title{{\TheTitle}\thanks{Submitted to the editors DATE.
\funding{The second  author was partially supported by the Israel Science Foundation (Grant 389/12), the Fund for the Promotion of Research at the Technion and by the Technion General Research Fund.}}}
\author{
  Victor I. Kolobov\thanks{Department of Computer Science, The Technion - Israel Institute of Technology, 32000 Haifa, Israel (\email{kolobov.victor@gmail.com}).}
  \and
  Simeon Reich\thanks{Department of Mathematics, The Technion - Israel Institute of Technology, 32000 Haifa, Israel; (\email{sreich@tx.technion.ac.il},
    \email{rzalas@tx.technion.ac.il}).}
  \and
  Rafa\L\ Zalas\footnotemark[3]
}
\DeclareMathOperator*{\interior}{int}
\DeclareMathOperator*{\ri}{ri}
\DeclareMathOperator*{\fix}{Fix}
\DeclareMathOperator*{\id}{Id}
\DeclareMathOperator*{\Argmax}{Argmax}
\DeclareMathOperator*{\argmax}{argmax}
\DeclareMathOperator*{\supremum}{sup}
\newcommand{\ac}[1] {{\color{black}#1}}
\begin{document}

\maketitle

% REQUIRED
\begin{abstract}
  In this article we consider a consistent convex feasibility problem in a real Hilbert space defined by a finite family of sets $C_i$. We are interested, in particular, in the case where for each $i$, $C_i=\fix U_i=\{z\in \mathcal H\mid p_i(z)=0\}$, $U_i\colon\mathcal H\rightarrow \mathcal H$ is a cutter and $p_i\colon\mathcal H\rightarrow [0,\infty)$ is a proximity function. Moreover, we make the following assumption: the computation of $p_i$ is at most as difficult as the evaluation of $U_i$ and this is at most as difficult as projecting onto $C_i$. We study a double-layer fixed point algorithm which applies two types of controls in every iteration step. The first one -- the outer control -- is assumed to be almost cyclic. The second one -- the inner control -- determines the most important sets from those offered by the first one. The selection is made in terms of proximity functions. The convergence results presented in this manuscript depend on the conditions which first, bind together the sets, the operators and the proximity functions and second, connect the inner and outer controls. In particular, weak regularity (demi-closedness principle), bounded regularity and bounded linear regularity imply weak, strong and linear convergence of our algorithm, respectively. The framework presented in this paper covers many known (subgradient) projection algorithms already existing in the literature; for example, those applied with (almost) cyclic, remotest-set, maximum displacement, most-violated constraint and simultaneous controls. In addition, we provide several new examples, where the double-layer approach indeed accelerates the convergence speed as we demonstrate numerically.
\end{abstract}

% REQUIRED
\begin{keywords}
  Block iterative algorithm, boundedly regular operator, boundedly regular family, common fixed point problem, convex feasibility problem, cutter, cyclic projection, demi-closed operator, Fej\'{e}r monotone sequence, quasi-nonexpansive operator, projection method, remotest-set projection, simultaneous projection, subgradient projection.
\end{keywords}

% REQUIRED
\begin{AMS}
  46N10, 46N40, 47H09, 47H10, 47N10, 47J25, 65F10, 65J99.
\end{AMS}

\section{Introduction}\label{sec:intro}
In this paper we consider a consistent \textit{convex feasibility problem} (CFP), which is to find $x\in C:=\bigcap_{i\in I} C_i\neq\emptyset$, where for each $i\in I:=\{1,\ldots,m\}$, $C_i$ is a closed and convex subset of a Hilbert space $\mathcal H$. We are interested, in particular, in the case where for every $i\in I$, there is a given cutter operator $U_i\colon \mathcal H\rightarrow\mathcal H$ and a \textit{proximity function} $p_i\colon\mathcal H\rightarrow [0,\infty)$ such that
\begin{equation}
C_i=\fix U_i=p_i^{-1}(0).
\end{equation}
We recall that an operator $U\colon\mathcal H \rightarrow \mathcal H$ with a nonempty fixed point set $\fix U:=\{z\in\mathcal H\mid Uz=z\}$ is said to be a \textit{cutter} if the inequality $\langle x-Ux, z-Ux\rangle\leq 0$ holds for all $x\in\mathcal H$ and $z\in\fix U$. Because of the operators $U_i$, oftentimes this variant of CFP is referred to as the \textit{common fixed point problem} (CFPP).

One possible approach to solving the CFP is to apply an iterative method involving metric projections onto each of the sets $C_i$, where at every step the chosen  projections $P_{C_i}$ are utilized in a certain way. This brings us to the class of the so-called \textit{projection methods}. However, in our case, the structure of the set $C_i$ provides additional information, which indicates that one \ac{can employ the cutters $U_i$ }%ac
 instead of $P_{C_i}$ whenever its computation is less difficult. This leads us to a more general class of the so-called \textit{fixed point algorithms}.

To devise our fixed point algorithm in an effective way, we make the following two state-of-the-art assumptions: \textit{the computation of $p_i$ is at most as difficult as the evaluation of $U_i$ and this is at most as difficult as projecting onto $C_i$}. These assumptions are satisfied if, for example, the set $C_i=\{z\in\mathcal H\mid f_i(z)\leq 0\}$ is a sublevel set of a convex functional $f_i$, the operator $U_i=P_{f_i}$ is a subgradient projection and the proximity $p_i=f^+_i$. When $p_i$ is unspecified then, by default, one can use the displacement $p_i(x)=\|U_ix-x\|$ and, finally, the distance $p_i(x)=d(x,C_i)$.

Building on these assumptions, we propose a \textit{double-layer fixed point algorithm}, where for an arbitrarily given starting point $x^0\in\mathcal H$, the sequence of consecutive approximations is defined by
\begin{equation}\label{eq:xk:intro}
x^{k+1}:=x^k+\alpha_k \left (\sum_{i\in I_k}  \omega_i^k U_ix^k - x^k \right),
\end{equation}
where $I_k\subseteq J_k\subseteq I$, for each $k=0,1,2,\ldots$. The scalar $\alpha_k\in (0,2)$ is called a \textit{relaxation parameter} and $\omega_i^k\in(0,1]$ satisfy $\sum_{i\in I_k}\omega_i^k=1$. We call the subsets $I_k$ and $J_k$ an \textit{inner} and \textit{outer control}, respectively. The inner control $I_k$ determines which operators $U_j$ among all of $j\in J_k$ we want to use in order to improve the current approximation $x^k$. By default, these operators are considered to be the most significant at the step $k$, which we measure in terms of the proximity functions $p_j(\cdot)$. In particular, we may use $I_k:=\{\argmax_{j\in J_k}p_j(x^k)\}$ or even more generally, $I_k:=\{t \text{ indices from $J_k$ with the largest proximity } p_j(x^k) \}$. Following Cegielski \cite[Section 5.8.4.1]{Cegielski2012}, one could also use $I_k:=\{i\in J_k\mid p_i(x^k)\geq t \max_{j\in J_k} p_j(x^k)\}$, where $t\in [0,1]$. In view of these examples one could expect that the most substantial outer control is $J_k=I$. Indeed, this case of the outer control was confirmed both analytically and numerically to be the most efficient one in terms of the convergence rate. Nonetheless, in the case of a large number of constraints $C_i$, the cost of making a decision with $J_k=I$ may turn out to be too high. Therefore we propose to restrict this process to a subset $J_k \subseteq I$, which is not necessarily all of $I$. We emphasize here that, rather surprisingly, using $J_k\subseteq I$, we may get similar results as for $J_k=I$; see Sections \ref{sec:conv} and \ref{sec:num}. In our paper we consider an \ac{$s$-\textit{intermittent} }%ac
 outer control, that is, a control for which
\begin{equation}\label{eq:Jk}
\ac{I= J_{k}\cup\ldots\cup J_{k+s-1}}%ac
\end{equation}
holds true for each $k=0,1,2,\ldots$ and some $s\in \mathbb N$. In particular, this extends the cyclic control $\{J_{[k]}\}_{k=0}^\infty$ with $[k]:=(k \text{ mod }s)+1$, while assuming that $J_1\cup\ldots\cup J_s =I$.

Although in (\ref{eq:xk:intro}) we consider a quite general framework, it is not difficult to see that it covers many projection ($U_i=P_{C_i}$), subgradient projection ($U_i=P_{f_i}$) and fixed point algorithms. For example, (\ref{eq:xk:intro}) includes various sequential methods of the form  $x^{k+1}=U_{i_k}x^k$. In this case, the control $\{i_k\}_{k=0}^\infty$ can be either cyclic ($i_k=(k \text{ mod } m)+1$) or it can determine either the remotest set ($i_k=\argmax_{i\in I}d(x^k,C_i)$) or the most violated constraint for sublevel sets ($i_k=\argmax_{i\in I}f_i^+(x^k)$). In addition, (\ref{eq:xk:intro}) includes various simultaneous methods ($x^{k+1}=\sum_{i\in J_k}\omega_i^kU_ix^k$), which for $J_k\neq I$ are oftentimes called \textit{block iterative methods}. Notice that simultaneous methods cover the cyclic case by choosing a proper control $\{J_k\}_{k=0}^\infty$. In this paper we discuss how the concept of a double-layer control may indeed accelerate the convergence speed of some particular fixed point algorithms.

\subsection{Contribution}
The contribution of our paper can be summarized in three statements. All of them depend on the conditions which first, bind together the sets $C_i$, the operators $U_i$ and the proximity functions $p_i$, $i\in I$, and second, connect inner and outer controls.
\begin{itemize}
\item For the most general form of these conditions we establish the weak convergence of $\{x^k\}_{k=0}^\infty$ to some point $x^\infty \in C$, while assuming that the operators $U_i$ are weakly regular ($U_i-\id$ are demi-closed at 0), $i\in I$; see Theorem \ref{th:conv}.

\item This convergence turns out to be a strong one if, in addition, the operators $U_i$, $i\in I$, and the family $\mathcal C=\{C_i\mid i\in I\}$ are boundedly regular; see Theorem \ref{th:conv}.

\item Moreover, by restricting these conditions and by assuming that $\mathcal C$ is boundedly linearly regular, we establish a linear rate of convergence. In this case we also comment on the error bound; see Theorem \ref{th:rate}. We recall that by a \textit{linear rate of convergence} we mean that $\|x^k-x^\infty\|\leq cq^k$ for some $c\in (0, \infty)$ and $q\in(0,1)$, where $c$ and $q$ may depend on $x^0$.
\end{itemize}

In particular, all of the above-mentioned statements can be applied either with $U_i=P_{C_i}$ and $p_i(x)=d(x,C_i)$; $U_i=P_{f_i}$ and $p_i(x)=f_i^+(x)$; \ac{see Sections \ref{sec:conv:proj} and \ref{sec:conv:sub}, respectively. }%ac
Moreover, we provide examples which easily fit the general framework.

\subsection{Related work}
We would like to begin with two important general observations regarding the types of convergence one could expect. The first is that in the infinite dimensional setting, in view of Hundal's counterexample, it may happen that even for basic cyclic or parallel projection methods the convergence can only be in the weak topology; see \cite{Hundal2004} and \cite{BauschkeMatouskovaReich2004}. Moreover, the result of Bauschke et al. \cite[Theorem 1.4]{BauschkeDeutschHundal2009} shows that norm convergence can be far away from a linear rate. Furthermore, it can be arbitrarily slow. See also Badea et al. \cite{BadeaGrivauxMuller2011} in this connection. Thus both norm and linear convergence require some additional assumptions to which we refer in general as bounded regularity and bounded linear regularity.

The germinal norm convergence result of the alternating projection method designed for two closed subspaces in Hilbert space is due to von Neumann \cite{Neumann1933}. The cyclic projection method applied to solving linear systems goes back to the seminal work of Kaczmarz \cite{Kaczmarz1937}, while its parallel version is due to Cimmino \cite{Cimmino1938}. The extension of von Neumann's result to more than two closed subspaces was established by Halperin \cite{Halperin1962}. A weak convergence theorem for a cyclic projection method for general closed and convex sets in Hilbert space was established by Bregman \cite{Bregman1965}. The case of general closed and convex sets in $\mathbb R^n$ in the context of Cimmino's method was considered by Auslender in \cite{Auslender1976}. Gurin et al. \cite{GurinPolyakRaik1967} formulated sufficient conditions for norm convergence of cyclic and remotest-set projection methods for closed and convex subsets of $\mathcal H$. These conditions are special cases of bounded regularity. Weak convergence of a simultaneous projection method in Hilbert space appeared in \cite{Pierra1984} by Pierra. Moreover, norm convergence was investigated under similar conditions to those which appeared in \cite{GurinPolyakRaik1967}. The block iterative ($J_k\subseteq I$) projection method for general closed and convex sets in $\mathbb R^n$ is due to Aharoni and Censor \cite{AharoniCensor1989}, while an extension can be found in \cite{FlamZowe1990} by Fl\r{a}m and Zowe. \ac{Some practical realizations of block iterative projection methods in the case of linear systems can be found in \cite{ElfvingHansenNikazad2016, Haltmeier2009}. }%ac
A finite dimensional subgradient projection method with the most-violated constraint control can be found in \cite{Eremin1968} by Eremin, but also in \cite[Section 5.4.2]{CensorZenios1997} by Censor and Zenios. Cyclic and parallel subgradient projection methods in $\mathbb R^n$ were studied by Censor and Lent \cite{CensorLent1982}, and by Dos Santos \cite{Santos1987}. Both weak and strong convergence results for more general variants of simultaneous and cyclic projection methods can be found, for example, in \cite{BauschkeBorwein1996} by Bauschke and Borwein, and in \cite{Combettes1996, Combettes1997} by Combettes. Norm convergence follows there from bounded regularity of families of sets. A similar result, but for a fixed point algorithm can be found, for example, in \cite{AleynerReich2008} by Aleyner and Reich. It is worth mentioning that a very general weak convergence result for a simultaneous fixed point algorithm can be found in \cite{CegielskiCensor2011} by Cegielski and Censor.

We emphasize here that weak convergence of a special case of the double-layer projection method was established in \cite[Theorem 5.8.25]{Cegielski2012} by Cegielski, where $I_k:=\{i\in J_k\mid \omega_i^k d(x^k,C_i)\geq \delta \max_{j\in J_k} d(x^k,C_j)\}$. This is also a prototypical example of method (\ref{eq:xk:intro}).

For a more detailed overview of weak and strong convergence results we refer the reader to related monographs by Censor and Zenios \cite{CensorZenios1997}, Byrne \cite{Byrne2008}, Escalante and Raydan \cite{EscalanteRaydan2011}, Cegielski \cite{Cegielski2012}, and Popa \cite{Popa2012}. A survey of the available literature can also be found in a recent paper by Cegielski and Censor \cite{CegielskiCensor2015}.

We now concentrate on the case where the convergence becomes linear. As we have mentioned above, a linear rate may happen only under some additional assumptions. The first result of this type is due to Aronszajn \cite[\S 12]{Aronszajn1950}, who established a linear rate for the von Neumann's alternating projection method assuming that the cosine of the angle between the two closed subspaces is less than one. Whereas Agmon \cite[Theorem 3]{Agmon1954} showed this type of convergence for a projection method applied to a system of linear inequalities in $\mathbb R^n$ with the most-violated constraint control. Gurin et al. \cite[Theorem 1]{GurinPolyakRaik1967} established the same type of convergence for a projection method combined either with cyclic or the remotest-set control. This method was applied to general closed and convex sets in $\mathcal H$, while assuming that their intersection has nonempty interior or, like Agmon, that every $C_i$ is a half-space. Nonempty interior guaranteed a linear rate of convergence in the result of Pierra \cite[Theorem 1.1]{Pierra1984}, who considered a simultaneous projection method. In this spirit, Eremin \cite[Theorem, p. 142]{Eremin1968} and Polyak \cite[Theorem 6]{Polyak1969} established a linear convergence rate for a subgradient projection method with the most-violated constraint control under the Slater condition ($f_i(x)<0$ for some $x$, $i\in I$). The Slater condition also appeared in the result of De Pierro and Iusem \cite[Theorem 2]{DePierroIusem1988} in connection with the cyclic subgradient projection. Bauschke and Borwein \cite[Theorems 5.7 and 5.8]{BauschkeBorwein1996} established the same rate of convergence for a general projection algorithm involving almost cyclic and the remotest-set controls. Their main assumptions were that the family of sets $\mathcal C=\{C_i\mid i\in I\}$ is boundedly linearly regular, which includes all of the above conditions, and that the algorithm is linearly focusing ($\delta d(x^k, C_i)\leq d(x^k,C_i^k)$ and $U_i$ is the metric projection onto $C_i^k\supseteq C_i$). A closely related result for both remotest-set and parallel ($I_k=J_k=I$) projection methods can be deduced from \cite[Theorem 2.2]{BeckTeboulle2003} by Beck and Teboulle. A similar rate for a cyclic projection method, expressed in terms of bounded linear regularity, appeared in \cite[Theorem 4.5]{DeutschHundal2008} by Deutsch and Hundal. The same authors investigated convergence rates from different aspects; see \cite{DeutschHundal2006a, DeutschHundal2006b}. Recently, Bauschke et al. \cite[Theorem 6.1]{BauschkeNollPhan2015} have established a linear rate for the simultaneous ($I_k=J_k\subseteq I$) fixed point algorithm with an almost cyclic control assuming that $\mathcal C=\{C_i\mid i\in I\}$ and $U_i$'s are boundedly linearly regular. Borwein, Li and Tam \cite[Theorem 3.6]{BorweinLiTam2015} have also investigated the convergence rate of these algorithms, but in terms of H\"{o}lder bounded regularity of the operators $U_i$ and the family $\mathcal C$.

\subsection{Organization of the paper}

Our paper is organized as follows. In Section \ref{sec:preliminaries} we comment on  Fej\'er monotone sequences, quasi-nonexpansive and regular operators, as well as on regular sets. In Section \ref{sec:conv} we present two of the main results of this manuscript, namely Theorems \ref{th:conv} and \ref{th:rate}. These theorems are revisited in the context of projection and subgradient projection methods in Subsections \ref{sec:conv:proj} and \ref{sec:conv:sub}, \ac{whereas in Subsection \ref{sec:conv:loppFlagg} we explain how one can combine lopping and flagging with our algorithm. }%ac
Finally, in Section \ref{sec:num} we provide numerical examples.

\section{Preliminaries}\label{sec:preliminaries}

Let $C\subseteq \mathcal{H}$ and $x\in \mathcal{H}$, where $\mathcal H$ is a Hilbert space. If there is a point $y\in C$ such that $\Vert y-x\Vert \leq \Vert z-x\Vert $ for all $z\in C$, then $y$ is called a \textit{metric projection} of $x$ onto $C$ and is denoted by $P_{C}x $.

Let $C$ be nonempty, closed and convex. Then for any $x\in
\mathcal{H}$, the metric projection $y:=P_{C}x$ is uniquely defined. Moreover, for every $y\in C$, we have $y=P_Cx$ if and only if
\begin{equation}
\langle x-y,z-y\rangle \leq 0
\end{equation}
for all $z\in C$; see, for example, \cite[Theorem 1.2.4]{Cegielski2012}. In addition, the functional $d(\cdot ,C):\mathcal H\rightarrow \mathbb [0,\infty)$, defined by $d(x,C):=\inf_{z\in C}\|x-z\|$, is 1-Lipschitz continuous, that is,
\begin{equation}
|d(x,C)-d(y,C)|\leq \|x-y\|
\end{equation}
for every $x,y\in\mathcal H$ and satisfies $d(x,C)=\| P_Cx-x\|$.

Let $f:\mathcal{H}\rightarrow\mathbb{R}$ be a convex and continuous function with a nonempty sublevel set $S(f,0):=\{x\mid f(x)\leq 0\}$. Denote by
$\partial f(x)$ its subdifferential, that is, $\partial f(x):=\{g\in\mathcal H\mid f(y)-f(x)\geq \langle g,y-x\rangle \text{, for all }y\in\mathcal H\}$. The continuity of $f$ implies that the set $\partial f(x)\neq\emptyset$ for all $x\in\mathcal H$ (see \cite[Proposition 16.14]{BauschkeCombettes2011}). For each $x\in\mathcal H$, let $g_f(x)\in \partial f(x)$ be a given subgradient. The so-called \textit{subgradient projection} relative to $f$ is the operator $P_{f}:\mathcal{H}\rightarrow \mathcal{H}$ defined by
\begin{equation}\label{eq:def:subProj}
P_f x:=
\begin{cases}
x-\frac{f(x)}{\| g_f(x)\|^2}g_f(x) & \text{ if $f(x)> 0$,} \\
x & \text{ otherwise;}
\end{cases}
\end{equation}
see, for example, \cite{BauschkeWangWangXu2015}. Notice that $P_f$ is well defined because \ac{$g_f(x)\neq 0$ if $f(x)>0$. }%ac
 To simplify notation, we sometimes write
\begin{equation}\label{eq:def:subProj:short}
P_f x=x-\frac{f_+(x)}{\| g_f(x)\|^2}g_f(x),
\end{equation}
where $a_+:=\max\{0,a\}$.

By definition, $\fix P_{f}=S(f,0)$. Moreover, we have, $P_fx=P_Hx$, where $H:=\{z\in\mathcal H\mid \langle g_f(x), z-x\rangle +f(x)\leq 0\}\supseteq S(f,0)$; see \cite[Fact 2.3]{BauschkeWangWangXu2015}. Consequently, for every $x\in \mathcal H$ and $z\in S(f,0)$, we have
\begin{equation}\label{eq:subProj:cutter}
\langle x-P_fx,z-P_fx\rangle \leq 0.
\end{equation}

Finally, we recall a very useful inequality related to convex functions in $\mathbb R^n$.

\begin{lemma}\label{th:Fukushima}
Let $f_i\colon \mathbb R^n\rightarrow \mathbb R$ be convex, $i\in I$, $g(x):=\max_{i\in I}f_i(x)$ and assume that the Slater condition is satisfied, that is, $g(z)<0$ for some $z\in \mathbb R^n$. Then for every compact subset $K$, there is $\delta_K>0$ such that
\begin{equation}\label{eq:th:Fukushima}
\delta_K d(x, S(g,0))\leq g_+(x)
\end{equation}
holds for every $x\in K$.
\end{lemma}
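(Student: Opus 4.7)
The plan is to establish this Hoffman-type error bound by exploiting the Slater point to construct, for each infeasible $x$, an explicit feasible point on the segment from $x$ to $z$, and then using the compactness of $K$ to obtain a uniform constant.

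First I would note that $g$ is convex and continuous as a finite maximum of convex continuous functions, and the inequality is trivial when $x\in S(g,0)$ since both sides vanish. So I would fix $x\in K\setminus S(g,0)$, for which $g(x)>0$, and consider the line segment $w_t:=(1-t)x+tz$, where $z$ is a Slater point with $g(z)<0$. By convexity of $g$,
\begin{equation*}
g(w_t)\leq (1-t)g(x)+tg(z),
\end{equation*}
so choosing $t^{*}:=g(x)/(g(x)-g(z))\in(0,1)$ forces $g(w_{t^{*}})\leq 0$, that is, $w_{t^{*}}\in S(g,0)$.

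The key step is then the estimate
\begin{equation*}
d(x,S(g,0))\leq \|x-w_{t^{*}}\|=t^{*}\|x-z\|=\frac{g(x)}{g(x)-g(z)}\|x-z\|\leq \frac{\|x-z\|}{-g(z)}\, g_+(x),
\end{equation*}
where in the last inequality I use that $g(x)>0$ implies $g(x)-g(z)\geq -g(z)>0$ and $g(x)=g_+(x)$. At this point compactness enters: setting $M_K:=\sup_{x\in K}\|x-z\|<\infty$, I obtain
\begin{equation*}
d(x,S(g,0))\leq \frac{M_K}{-g(z)}\, g_+(x),
\end{equation*}
so $\delta_K:=-g(z)/M_K>0$ does the job for all $x\in K$ (including the trivial case, where both sides are zero).

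I do not expect a genuine obstacle here; the only points requiring mild care are to verify that $t^{*}\in(0,1)$ whenever $x\notin S(g,0)$, to justify convexity and continuity of $g$ so that the convex combination inequality applies, and to observe that finite dimensionality is not actually used in the argument beyond the assertion that the Slater point exists and that $K$ is (norm-)compact with $M_K<\infty$.
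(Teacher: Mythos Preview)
Your argument is correct. The convex-combination trick with the Slater point is exactly the standard elementary proof of this error bound, and all the steps you outline check out: $t^{*}\in(0,1)$ whenever $g(x)>0$ and $g(z)<0$, the point $w_{t^{*}}$ lands in $S(g,0)$, and compactness of $K$ turns the pointwise estimate into a uniform one. The only cosmetic edge case is $M_K=0$, which forces $K\subseteq S(g,0)$ and makes the inequality vacuous; you could simply dispose of this case separately before defining $\delta_K$.

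As for comparison with the paper: the paper does not actually prove this lemma but merely cites Fukushima's paper. Your self-contained argument is therefore strictly more than what the paper supplies and is the standard way to establish the result directly. Your closing remark is also on point: the argument works in any normed space provided $g$ is continuous (which in $\mathbb{R}^n$ follows automatically from convexity, but in infinite dimensions would have to be assumed).
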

\begin{proof}\
See, for example, \cite[Lemma 3.3]{Fukushima1984}.
\end{proof}

\subsection{Fej\'er monotone sequences}
\begin{definition}
Let $C\subseteq\mathcal H$ be a nonempty, closed and convex set, and let $\{x^k\}_{k=0}^\infty$ be a sequence in $\mathcal H$. We say that $\{x^k\}_{k=0}^\infty$ is \textit{Fej\'er monotone} with respect to $C$ if
\begin{equation}
\|x^{k+1}-z\|\leq\|x^k-z\|
\end{equation}
for all $z\in C$ and every integer $k\geq 0$.
\end{definition}

Below we present several key properties of Fej\'er monotone sequences, which we apply in our convergence analysis in order to establish weak, strong and linear convergence.

\begin{theorem} \label{th:Fejer:weak}
Let the sequence $\{x^k\}_{k=0}^\infty$ be Fej\'er monotone with respect to $C$. Then
\begin{enumerate}[(i)]
\item $\{x^k\}_{k=0}^\infty$ converges weakly to some point $x^\infty\in C$ if and only if all its weak cluster points lie in $C$;

\item $\{x^k\}_{k=0}^\infty$ converges strongly to some point $x^\infty\in C$ if and only if
    \begin{equation}
    \lim_{k\rightarrow\infty}d(x^k,C) =0;
    \end{equation}

\item if there is some constant $q\in(0,1)$ such that $d(x^{k+1},C)\leq q d(x^k,C)$ holds for every $k=0,1,2,\ldots$, then $\{x^k\}_{k=0}^\infty$ converges linearly to some point $x^\infty\in C$ and
\begin{equation}
\|x^k-x^\infty\|\leq 2d(x^0,C)q^k;
\end{equation}

\item if $\{x^{ks}\}_{k=0}^\infty$ converges linearly to some point $x^\infty\in C$, that is, $\|x^{ks}-x^\infty\|\leq c q^k$ for some constants $c>0$, $q\in(0,1)$ and integer $s$, then the entire sequence $\{x^k\}_{k=0}^\infty$ converges linearly and moreover,
\begin{equation}
\|x^k-x^\infty\|\leq \frac{c}{(\sqrt[\scriptstyle{s}]{q})^{s-1}} \left(\sqrt[\scriptstyle{s}]{q}\right)^k;
\end{equation}

\item if $\{x^{k}\}_{k=0}^\infty$ converges strongly to some point $x^\infty\in C$, then $\|x^k-x^\infty\|\leq 2d(x^k,C)$ for every $k=0,1,2,\ldots$.
\end{enumerate}
\end{theorem}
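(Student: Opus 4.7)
The plan is to exploit two fundamental consequences of Fejér monotonicity: the sequence $\{\|x^k-z\|\}$ is nonincreasing (hence convergent) for every $z\in C$, and the contraction $\|x^l-P_Cx^k\|\le\|x^k-P_Cx^k\|=d(x^k,C)$ for all $l\ge k$. Almost every item follows by combining these two facts with the triangle inequality and a limit argument; the only nonroutine step is (i), which requires the standard weak-cluster-point uniqueness trick.

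For (i), boundedness of $\{x^k\}$ from Fejér monotonicity guarantees the existence of weak cluster points, so I only need the uniqueness direction. Assume $u,v\in C$ are two weak subsequential limits. Since $\|x^k-u\|$ and $\|x^k-v\|$ are both convergent, so is the difference $\|x^k-u\|^2-\|x^k-v\|^2 = 2\langle x^k, v-u\rangle + \|u\|^2-\|v\|^2$, which forces $\langle x^k,v-u\rangle$ to converge; evaluating the limit along the two subsequences that converge weakly to $u$ and to $v$ yields $\langle u,v-u\rangle=\langle v,v-u\rangle$, i.e.\ $\|u-v\|^2=0$.

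For (ii)--(iii) and (v), I will use the same device: fix $N$ and observe that for every $l\ge N$, Fejér monotonicity with respect to the point $P_CX^N\in C$ gives
\begin{equation}
\|x^l-P_Cx^N\|\le\|x^N-P_Cx^N\|=d(x^N,C).
\end{equation}
This immediately yields $\|x^k-x^l\|\le 2d(x^N,C)$ for $k,l\ge N$, proving Cauchy-ness (hence (ii)) whenever $d(x^k,C)\to 0$. Letting $l\to\infty$ inside the inequality above, once strong convergence to $x^\infty$ is known, gives $\|x^\infty-P_Cx^N\|\le d(x^N,C)$, whence $\|x^N-x^\infty\|\le 2d(x^N,C)$, which is (v). For (iii) I iterate the hypothesis to get $d(x^k,C)\le q^k d(x^0,C)$ and then apply (v) (whose proof only used Fejér monotonicity and the already-established strong convergence coming from the summable upper bound) to obtain the stated constant~$2$.

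For (iv), the idea is simply to interpolate an arbitrary index $k$ between consecutive multiples of $s$. Writing $k=js+r$ with $0\le r\le s-1$, Fejér monotonicity with respect to $x^\infty\in C$ gives $\|x^k-x^\infty\|\le\|x^{js}-x^\infty\|\le cq^j$, and bounding $q^j=q^{(k-r)/s}\le q^{-(s-1)/s}\,(q^{1/s})^k$ yields the announced rate. No step here is genuinely obstructive; the subtlest point is being careful in (i) that one may extract weak subsequential limits from a bounded sequence in a Hilbert space and that the identity manipulation really forces $u=v$, which is the place I would write out with full detail.
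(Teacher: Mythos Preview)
Your proof is correct and follows the standard arguments for Fej\'er monotone sequences. Note, however, that the paper does not actually prove this theorem: it simply cites \cite[Theorem 2.16 and Proposition 1.6]{BauschkeBorwein1996}. So there is no ``paper's own proof'' to compare against---you have supplied the details that the authors delegated to the literature, and your treatment of each item (the Opial-type uniqueness argument in (i), the Cauchy estimate via $\|x^l-P_Cx^N\|\le d(x^N,C)$ in (ii), (iii), (v), and the interpolation between multiples of $s$ in (iv)) matches what one finds in the cited reference.
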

\begin{proof}
See, for example, \cite[Theorem 2.16 and Proposition 1.6]{BauschkeBorwein1996}.
\end{proof}
\subsection{Quasi-nonexpansive operators}

\begin{definition}\label{def:QNE}
Let $U:\mathcal H\rightarrow\mathcal H$ be an
operator with a fixed point, that is, $\fix U=\{z\in \mathcal H\mid z=Uz\}
\neq \emptyset$. We say that $U$ is
\begin{enumerate}[(i)]
\item \textit{quasi-nonexpansive} (QNE) if for all $x\in\mathcal H$ and all $z\in\fix U$,
\begin{equation}
\| U x -z\|\leq\| x-z\|;\label{eq:qne}%
\end{equation}

\item $\rho$\textit{-strongly quasi-nonexpansive} ($\rho$-SQNE), where $\rho\geq 0$, if for all $x\in\mathcal H$ and all $z\in\fix U$,
\begin{equation}
\| Ux-z\|^2\leq\| x-z\|^2-\rho\| U
x -x\|^2;\label{eq:sqne}
\end{equation}

\item a \textit{cutter} if for all $x\in\mathcal H$ and all $z\in\fix U$,
\begin{equation}
\langle z-U x ,x-U x \rangle\leq 0\label{eq:cutter}.
\end{equation}
\end{enumerate}
\end{definition}
A comprehensive review of the properties of QNE, SQNE and cutter operators
can be found in \cite[Chapter 2]{Cegielski2012}.

\begin{example}
Both the metric projection $P_C$ onto a nonempty, closed and convex set $C\subseteq\mathcal H$, and the subgradient projection $P_f$ associated with a continuous and convex function $f\colon\mathcal H\rightarrow \mathbb R$ with a nonempty sublevel set are cutters.
\end{example}

For a given $U\colon\mathcal H\rightarrow\mathcal H$ and $\alpha
\in (0,\infty)$, the operator $U_{\alpha}:=\id +\alpha(U-\id )$ is called an $\alpha$-\textit{relaxation of} $U$, where by $\id $ we denote the identity operator. We call $\alpha$ a \textit{relaxation parameter}. It is easy to see that for every $\alpha\neq 0$, $\fix U = \fix U_\alpha$. Usually, in connection with iterative methods, as in (\ref{eq:xk:intro}), the relaxation parameter $\alpha$ is assumed to belong to the interval $(0,2]$.

\begin{theorem}
\label{th:cuttersAndQNE}
Let $U\colon\mathcal H\rightarrow\mathcal H$ be an operator with a fixed point and let $\alpha\in(0,2]$. Then $U$ is a cutter if and only if its relaxation $\id +\alpha(U-\id )$ is $(2-\alpha)/\alpha$-strongly quasi-nonexpansive. In particular, $U$ is a cutter if and only if $U$ is $1$-strongly quasi-nonexpansive. Furthermore, $U$ is quasi-nonexpansive if and only if $\frac{1}{2}(U+\id)$ is a cutter.
\end{theorem}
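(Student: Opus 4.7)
The plan is to prove part (1) by a direct algebraic expansion relating the cutter inequality to the SQNE inequality for $U_\alpha$, and then obtain parts (2) and (3) as corollaries by specializing the relaxation parameter.

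For part (1), I would set $U_\alpha := \id + \alpha(U - \id)$, note that $\fix U_\alpha = \fix U$ for $\alpha \in (0,2]$, and compute, for arbitrary $x \in \mathcal H$ and $z \in \fix U$,
\begin{equation}
\|U_\alpha x - z\|^2 = \|x - z\|^2 + 2\alpha\langle Ux - x,\, x - z\rangle + \alpha^2 \|Ux - x\|^2,
\end{equation}
together with $\|U_\alpha x - x\|^2 = \alpha^2\|Ux - x\|^2$. Rewriting the cutter inequality $\langle x - Ux, z - Ux\rangle \le 0$ in the equivalent form $\langle Ux - x, x - z\rangle \le -\|Ux - x\|^2$ and substituting gives
\begin{equation}
\|U_\alpha x - z\|^2 \le \|x - z\|^2 - \alpha(2 - \alpha)\|Ux - x\|^2 = \|x - z\|^2 - \frac{2 - \alpha}{\alpha}\|U_\alpha x - x\|^2,
\end{equation}
which is the $(2-\alpha)/\alpha$-SQNE property. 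Since each step in this chain is reversible (after dividing by $2\alpha > 0$), the converse direction is obtained by running the same computation backwards, yielding the cutter inequality from the SQNE one. This argument is essentially a single identity, so no subtle step is involved.

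Part (2) is simply the instance $\alpha = 1$: then $U_\alpha = U$ and $(2-\alpha)/\alpha = 1$, so $U$ is a cutter iff $U$ is $1$-SQNE.

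For part (3), the key observation is that $V := \tfrac{1}{2}(U + \id) = \id + \tfrac{1}{2}(U - \id)$, hence $\fix V = \fix U$ and $V_2 := \id + 2(V - \id) = U$. Applying part (1) to the operator $V$ with relaxation parameter $\alpha = 2$, we obtain that $V$ is a cutter if and only if $V_2 = U$ is $(2-2)/2 = 0$-SQNE, i.e., quasi-nonexpansive. The only point worth verifying is that $\fix V$ is nonempty (so that part (1) applies), which holds because $\fix U \ne \emptyset$ by hypothesis and $\fix V = \fix U$. I do not anticipate any real obstacle here: the whole theorem reduces to the expansion in part (1), and the quadratic term $\alpha(2-\alpha)$ naturally accommodates both the unit-coefficient SQNE case ($\alpha = 1$) and the QNE case ($\alpha = 2$) through a single relaxation identity.
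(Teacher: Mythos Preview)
Your proof is correct. The paper itself does not give an argument for this theorem but simply cites \cite[Theorem 2.1.39]{Cegielski2012}, so there is no proof in the paper to compare against; your direct algebraic expansion is precisely the standard argument one finds in that reference, and the specializations $\alpha=1$ and $\alpha=2$ for parts (2) and (3) are handled correctly.
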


\begin{proof}
\ac{See, for example, \cite[Theorem 2.1.39]{Cegielski2012}.}%ac
\end{proof}

\begin{remark}\label{rem:SQNEandMP}
The fixed point set of a QNE operator is closed and convex (see \cite[Proposition 2.6(ii)]{BauschkeCombettes2001}).
Note that by definition and by substituting $z=P_{\fix U}x$, for any $\rho$-SQNE operator $U$ ($\rho>0$), we have
\begin{equation}
\sqrt{\rho}\| Ux-x\|\leq d(x,\fix U),
\end{equation}
an inequality which for cutters holds with $\rho=1$.
\end{remark}

\begin{theorem} \label{th:SQNE:IneqConv}
Let $U_i:\mathcal H\rightarrow \mathcal H$ be $\rho_i$-strongly quasi-nonexpansive, $\rho_i>0$, $i\in I$, with $\bigcap_{i\in I}\fix U_i\neq \emptyset$ and let $U:=\sum_{i\in I}\omega_i U_i$, where $\omega_i > 0$, $i\in I$ and $\sum_{i\in I}\omega_i=1$. Then $U$ is $\rho$-strongly quasi-nonexpansive, where $\rho:=\min_{i\in I}\rho_i>0$ and where $\fix  U =\bigcap_{i\in I}\fix U_i$. Moreover,
\begin{equation}\label{eq:th:SQNE:IneqConv:1}
\|Ux-z\|^2\leq\|x-z\|^2 - \sum_{i\in I} \omega_i \rho_i \|U_ix-x\|^2
\end{equation}
for every $x\in \mathcal H$ and $z\in \bigcap_{i\in I}\fix U_i$. Consequently,
\begin{equation}\label{eq:th:SQNE:IneqConv:2}
\frac{1}{2R}\sum_{i\in I}\omega_{i}\rho_{i}\| U_{i} x
-x\|^2\leq\| U x -x\|
\end{equation}
for any positive $R\geq\| x-z\|$.
\end{theorem}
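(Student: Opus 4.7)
The plan is to exploit the convexity of $\|\cdot\|^2$, which is the main tool whenever one averages quasi-nonexpansive operators. Specifically, since $\sum_{i\in I}\omega_i=1$ and $\omega_i>0$, for any family of points $\{y_i\}_{i\in I}$ in $\mathcal H$ and any $z\in\mathcal H$ we have
\begin{equation*}
\left\|\sum_{i\in I}\omega_i y_i - z\right\|^2 \;\leq\; \sum_{i\in I}\omega_i\|y_i-z\|^2.
\end{equation*}
Applying this with $y_i=U_ix$ and using that each $U_i$ is $\rho_i$-SQNE gives
\begin{equation*}
\|Ux-z\|^2 \;\leq\; \sum_{i\in I}\omega_i\|U_ix-z\|^2 \;\leq\; \sum_{i\in I}\omega_i\bigl(\|x-z\|^2-\rho_i\|U_ix-x\|^2\bigr),
\end{equation*}
which after collecting terms yields (\ref{eq:th:SQNE:IneqConv:1}). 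This is the technical heart of the proof.

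To get that $U$ itself is $\rho$-SQNE with $\rho=\min_{i\in I}\rho_i$, I would apply the same convexity inequality once more to $Ux-x=\sum_{i\in I}\omega_i(U_ix-x)$, obtaining $\|Ux-x\|^2\leq\sum_{i\in I}\omega_i\|U_ix-x\|^2$. Since $\rho\leq\rho_i$ for every $i$, this bound together with (\ref{eq:th:SQNE:IneqConv:1}) gives $\|Ux-z\|^2\leq\|x-z\|^2-\rho\|Ux-x\|^2$.

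For the fixed point identity, the inclusion $\bigcap_{i\in I}\fix U_i\subseteq\fix U$ is immediate from linearity of $U$. The reverse inclusion is where (\ref{eq:th:SQNE:IneqConv:1}) pays off: if $x\in\fix U$ and $z\in\bigcap_{i\in I}\fix U_i$ (which is nonempty by assumption), then substituting $Ux=x$ into (\ref{eq:th:SQNE:IneqConv:1}) forces $\sum_{i\in I}\omega_i\rho_i\|U_ix-x\|^2\leq 0$, so $U_ix=x$ for every $i$ since all $\omega_i\rho_i>0$.

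Finally, for (\ref{eq:th:SQNE:IneqConv:2}) I would rewrite (\ref{eq:th:SQNE:IneqConv:1}) as
\begin{equation*}
\sum_{i\in I}\omega_i\rho_i\|U_ix-x\|^2 \;\leq\; \|x-z\|^2-\|Ux-z\|^2 \;=\; \bigl(\|x-z\|+\|Ux-z\|\bigr)\bigl(\|x-z\|-\|Ux-z\|\bigr).
\end{equation*}
Because $U$ is quasi-nonexpansive, $\|Ux-z\|\leq\|x-z\|\leq R$, so the first factor is at most $2R$; the triangle inequality gives $\|x-z\|-\|Ux-z\|\leq\|Ux-x\|$ for the second factor. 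Combining these bounds delivers (\ref{eq:th:SQNE:IneqConv:2}). I do not anticipate a real obstacle: the only subtlety is remembering to apply the convexity of $\|\cdot\|^2$ twice — once to get the displacement-weighted bound and once more to pass from $\sum_i\omega_i\|U_ix-x\|^2$ to $\|Ux-x\|^2$.
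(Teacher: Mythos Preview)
Your proof is correct and is precisely the standard argument. The paper does not give its own proof of this theorem but simply cites \cite[Theorem 2.1.50]{Cegielski2012} and \cite[Proposition 4.5]{CegielskiZalas2014}; your write-up is essentially what one finds there, using the convexity of $\|\cdot\|^2$ for (\ref{eq:th:SQNE:IneqConv:1}) and the SQNE property, and then the factorization $\|x-z\|^2-\|Ux-z\|^2=(\|x-z\|+\|Ux-z\|)(\|x-z\|-\|Ux-z\|)$ together with quasi-nonexpansivity and the reverse triangle inequality for (\ref{eq:th:SQNE:IneqConv:2}).
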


\begin{proof}
See, for example, \cite[Theorem 2.1.50]{Cegielski2012} for the first part and \cite[Proposition 4.5]{CegielskiZalas2014} for inequalities (\ref{eq:th:SQNE:IneqConv:1}) and (\ref{eq:th:SQNE:IneqConv:2}).
\end{proof}

\subsection{Regular sets}
\begin{definition}\label{def:BR:set}
Let $C_i\subseteq\mathcal H$, $i\in I$, be closed and convex with $C:=\bigcap_{i\in I}C_i \neq\emptyset$ and let $\mathcal C =\{C_i\mid i\in I\}$. Let $S\subseteq\mathcal H$ be nonempty. We say that $\mathcal C$ is
\begin{enumerate}[(i)]
\item \textit{regular} over $S$ if for every sequence $\{x^k\}_{k=0}^\infty\subseteq S$,
\begin{equation}\label{eq:def:BR:set}
\lim_{k\rightarrow\infty} \max_{i\in I} d(x^k, C_i) =0 \Longrightarrow
\lim_{k\rightarrow\infty} d(x^k, C)=0;
\end{equation}

\item \textit{linearly regular} over $S$ if there is $\kappa_S>0$ such that for every $x\in S$,
\begin{equation}\label{eq:def:BLR:set}
d(x, C)\leq\kappa_S \max_{i\in I} d(x, C_i).
\end{equation}
\end{enumerate}
If any of the above regularity conditions holds for every subset $S\subseteq\mathcal H$, then we simply omit the phrase ``over $S$". If the same holds, but restricted to bounded subsets $S\subseteq\mathcal H$, then we precede the term with the adverb \textit{boundedly}.
\end{definition}
Below we present some sufficient conditions for regularities of sets. Many more conditions can be found, for example, in \cite[Fact 5.8]{BauschkeNollPhan2015}.
\begin{theorem}\label{eq:thm:BLR:prop}
Let $C_i\subseteq\mathcal H$, $i\in I$, be closed and convex with $C:=\bigcap_{i\in I}C_i \neq\emptyset$ and let $\mathcal C =\{C_i\mid i\in I\}$. Then the following statements hold:
\begin{enumerate}[(i)]
\item if $\dim\mathcal H<\infty$, then $\mathcal C$ is boundedly regular;
\item if $C_j\cap\interior\bigcap_{i\in I\setminus\{j\}}C_i\neq\emptyset$, then $\mathcal C$ is boundedly linearly regular;
\item if each $C_i$ is a half-space, then $\mathcal C$ is linearly regular;
\item if $\dim\mathcal H<\infty$, $C_j$ is a half-space, $j\in J\subseteq I$, and $\bigcap_{j\in J}C_j \cap \bigcap_{i\in I\setminus J}\ri C_i\neq\emptyset$, then $\mathcal C$ is boundedly linearly regular.

\end{enumerate}
\end{theorem}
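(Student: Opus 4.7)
The plan is to prove the four items separately, treating them in the order (i), (iii), (ii), (iv), since each prepares the ground for the next. Most of these are classical results from the literature on error bounds and bounded regularity, so my writeup would be a combination of short self-contained arguments for (i) and (iii) together with references to the literature (e.g.\ Bauschke--Borwein \cite{BauschkeBorwein1996} or \cite[Fact~5.8]{BauschkeNollPhan2015}) for the more technical (ii) and (iv), accompanied by a sketch of the underlying idea.

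For (i), I would argue by contradiction using finite-dimensional compactness. Suppose $\mathcal C$ is not boundedly regular; then there exist a bounded sequence $\{x^k\}\subseteq S$ and $\varepsilon>0$ with $\max_{i\in I} d(x^k,C_i)\to 0$ and $d(x^k,C)\geq \varepsilon$ for all $k$. Since $\dim\mathcal H<\infty$, I extract a subsequence $x^{k_j}\to x^\ast$. By the $1$-Lipschitz continuity of $d(\cdot,C_i)$ recalled in Section \ref{sec:preliminaries}, $d(x^\ast,C_i)=0$ for every $i\in I$, hence $x^\ast\in C$ and $d(x^{k_j},C)\leq \|x^{k_j}-x^\ast\|\to 0$, a contradiction. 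For (iii), I would invoke Hoffman's classical error bound: if $C_i=\{z\in\mathcal H\mid \langle a_i,z\rangle\leq \beta_i\}$, then there exists $\kappa>0$, depending only on the finite family $\{a_i\}_{i\in I}$, such that $d(x,C)\leq \kappa\max_{i\in I}(\langle a_i,x\rangle-\beta_i)_+$ for every $x\in\mathcal H$, and since $d(x,C_i)=\|a_i\|^{-1}(\langle a_i,x\rangle-\beta_i)_+$, absorbing $\max_i\|a_i\|$ into the constant gives linear regularity on all of $\mathcal H$.

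For (ii), the key ingredient is an interior point $c\in C_j\cap\bigcap_{i\neq j}C_i$ with a ball $B(c,r)\subseteq \bigcap_{i\in I\setminus\{j\}}C_i$ for some $r>0$. Given a bounded set $S$, I would set $M:=\sup_{x\in S}\|x-c\|$ and, for each $x\in S$, denote $\delta:=\max_{i\in I}d(x,C_i)$. The strategy, following Bauschke--Borwein, is to move from $P_{C_j}x$ towards $c$ along the segment $[P_{C_j}x,c]$: for a suitable convex combination $y$ depending on $\delta$ and $r$, the convexity of each $C_i$ together with the inclusion $B(c,r)\subseteq C_i$ forces $y\in C_i$ for all $i\neq j$, while $y\in C_j$ follows from $P_{C_j}x,c\in C_j$; a direct estimate then yields $d(x,C)\leq \|x-y\|\leq \kappa_S\delta$ with $\kappa_S$ proportional to $M/r$.

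For (iv), I would combine (ii) and (iii) via a reduction to the affine hull of $\bigcap_{i\in I\setminus J}C_i$, where the relative interior becomes a genuine interior, so that (ii) applies to the block indexed by $I\setminus J$ while (iii) handles the half-space block $\{C_j\}_{j\in J}$; a triangle-inequality-type estimate relative to the two families then produces a boundedly linear regularity constant on any bounded $S$. The main obstacle I anticipate is in (iv): correctly handling the passage to the affine hull (so that a Slater-type point gives a genuine interior point of the reduced problem) and combining the two error bounds without losing the bounded dependence of the constant on $S$. Once this reduction is made cleanly, the finite-dimensional hypothesis together with (ii) and (iii) closes the argument.
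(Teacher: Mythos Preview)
The paper does not actually prove this theorem: it is stated without proof as a collection of known facts, with the surrounding text pointing to \cite[Fact~5.8]{BauschkeNollPhan2015} for these and further sufficient conditions. Your proposal therefore already goes well beyond the paper, which simply cites the literature. Your arguments for (i) and (ii) are the standard ones and are correct; your plan for (iv) via reduction to an affine hull combined with (ii) and (iii) is also the usual route in, e.g., Bauschke--Borwein \cite{BauschkeBorwein1996}.

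One small point worth tightening in (iii): Hoffman's error bound is classically stated in $\mathbb{R}^n$, while the theorem asserts linear regularity of finitely many half-spaces in a general Hilbert space $\mathcal H$. This is still true, but you should say explicitly how the infinite-dimensional case reduces to the finite-dimensional one: the normals $a_1,\ldots,a_m$ span a finite-dimensional subspace $V$, the metric projection $P_V$ commutes with each $P_{C_i}$ in the sense that $d(x,C_i)=d(P_Vx+(\mathrm{Id}-P_V)x,C_i)$ depends on $x$ only through $P_Vx$ and the affine offset, and $d(x,C)$ decomposes similarly, so the Hoffman constant obtained on $V$ serves on all of $\mathcal H$. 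With that remark added, your writeup would be a perfectly acceptable self-contained substitute for the bare citation the paper gives.
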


\begin{remark}
Following \cite{BadeaGrivauxMuller2011} by Badea, Grivaux and M\"uller, we recall the notion of the extended cosine of the Friedrichs angle corresponding to a finite family of closed subspaces $C_i\subseteq\mathcal H$, $i\in I$, for which $C:=\bigcap_{i\in I}C_i \neq\emptyset$, that is,
\begin{equation}\label{eq:def:Friedrich}
c(C_1,...,C_m):=\sup\left\{\frac{1}{m-1}\frac{\sum_{i\neq j}\langle c_i,c_j\rangle }{\sum_{i=1}^m\|c_i\|^2}\mid c_i\in C_i\cap C^{\perp}\text{ and } \sum_{i=1}^m\|c_i\|^2\neq 0\right\}.
\end{equation}
In order to relate this parameter to the linear regularity constant, we focus our attention on the smallest possible $\kappa_{\mathcal H}$ which appears in (\ref{eq:def:BLR:set}), that is,
\begin{equation}
  \kappa(C_1,...,C_m):=\supremum_{x\notin C}\frac{d(x,C)}{\max_{i\in I}d(x,C_i)}.
\end{equation}
The inverse of this constant is oftentimes called the \textit{inclination} of the subspaces; see, for example, \cite{PustylnikReichZaslavski2012, PustylnikReichZaslavski2013} by Pustylnik, Reich and Zaslavski. By \cite[Proposition 3.9]{BadeaGrivauxMuller2011}, we have
\begin{equation}\label{eq:thm:incBLR:res}
\frac{1}{\sqrt{(1-c)(m-1)}}\leq\kappa\leq\frac{2m}{(1-c)(m-1)}.
\end{equation}
In particular, $\kappa<\infty$ if and only if $c<1$.
\end{remark}

\subsection{Regular operators}
We extend the notion of (boundedly) (linearly) regular families of sets to (boundedly) (linearly) regular operators. We adopt the naming convention in \cite{BauschkeNollPhan2015}. To make our nomenclature consistent, we also introduce the term ``weakly regular''.

\begin{definition}\label{def:BR:oper}
Let $U:\mathcal H\rightarrow\mathcal H$ be an
operator with a fixed point, that is, $\fix U
\neq \emptyset$ and let $S\subseteq\mathcal H$ be nonempty. We say that the operator $U$ is
\begin{enumerate}[(i)]
\item \textit{weakly regular} over $S$ if for any sequence $\{x^{k}\}_{k=0}^\infty \subseteq S$ and $x^\infty\in \mathcal H$,
\begin{equation}
\label{eq:def:DC}
\left .
\begin{array}{l}
x^{k}\rightharpoonup x^\infty\\
U x^k-x^k\rightarrow 0
\end{array}
\right\}\quad\Longrightarrow\quad x^\infty\in \fix U;
\end{equation}

\item \textit{regular} over $S$ if for any sequence $\{x^{k}\}_{k=0}^\infty \subseteq S$,
\begin{equation} \label{eq:def:BR:oper}
\lim_{k\rightarrow\infty}\| Ux^k-x^k\| =0\quad\Longrightarrow\quad \lim_{k\rightarrow\infty}d(x^k,\fix U)=0;
\end{equation}

\item \textit{linearly regular} over $S$ if there is $\delta_S>0$ such that for every $x\in S$,
\begin{equation} \label{eq:def:BLR:oper}
\delta_S d(x,\fix U)\leq \|Ux-x\|.
\end{equation}
\end{enumerate}
If any of the above regularity conditions holds for every subset $S\subseteq\mathcal H$, then we simply omit the phrase ``over $S$". If the same condition holds when restricted to bounded subsets $S\subseteq\mathcal H$, then we precede the term with the adverb \textit{boundedly}. Since there is no need to distinguish between boundedly weakly and weakly regular operators, we call both weakly regular.

\end{definition}
\begin{remark}
The above definition, points (ii) and (iii), indeed extends Definition \ref{def:BR:set}. To see this, let $C_i\subseteq\mathcal H$, $i\in I$, be closed and convex with $C:=\bigcap_{i\in I}C_i \neq\emptyset$ and let $\mathcal C =\{C_i\mid i\in I\}$. Moreover, let $U:=P_{C_{i(x)}}x$, where $i(x):=\min \{i\in I \mid d(x,C_i)= \max_{j\in I}d(x,C_j)\}.$ It is not difficult to see that $\fix U=C$ and  $\|Ux-x\|=\max_{i\in I}d(x,C_i)$. Therefore the family $\mathcal C$ is (boundedly) (linearly) regular if and only if the operator $U$ is (boundedly) (linearly) regular.
\end{remark}

Note that for $S=\mathcal H$, saying that $U$ is weakly regular is nothing else than saying that $U-\id$ is demi-closed at 0. The definition of demi-closed operators goes back to papers by Browder and Petryshyn \cite{BrowderPetryshyn1966}, and by Opial \cite{Opial1967}. Some authors refer to condition (\ref{eq:def:DC}) as a \textit{demi-closedness principle}; \ac{see \cite[Definition 4.2]{Cegielski2015} and \cite[page 388]{Cegielski2015b}. }%ac
 The concept of a demi-closed operator has been extended in many directions. For example Bauschke, Chen and Wang \cite{BauschkeChenWang2014} have recently considered a \textit{fixed point closed mapping} (defined in \cite[Lemma 2.1]{BauschkeChenWang2014}), where the weak convergence to $x^\infty$ was replaced by a strong one. On the other hand, Cegielski \cite[Definition 4.4]{Cegielski2015} has considered a demi-closedness condition referring to a family of operators instead of to a single one; in this direction see also \cite[Lemma 3.4]{ReichZalas2016}.

A prototypical version of condition (\ref{eq:def:BR:oper}) with a continuous $U$ can be found in \cite[Theorem 1.2]{PetryshynWilliamson1973} by Petryshyn and Williamson. To the best of our knowledge, boundedly regular operators in this form first appeared in \cite{CegielskiZalas2013} by Cegielski and Zalas, who applied them to variational inequalities under the name \textit{approximately shrinking}. Independently, Bauschke, Noll and Phan \cite{BauschkeNollPhan2015} investigated unrestricted iterations of these operators in connection with common fixed point problems under the name \textit{boundedly regular}. Many properties of these operators (under the name approximately shrinking) were presented in \cite{CegielskiZalas2014} with some extensions in \cite{Zalas2014}, \cite{ReichZalas2016} and \cite{Cegielski2016}, and further applications in \cite{Cegielski2015} and \cite{CegielskiMusallam2016}.

The name \textit{linearly boundedly regular} operators was proposed by Bauschke, Noll and Phan, who applied them to establish a linear rate of convergence for a block iterative fixed point algorithm \cite[Theorem 6.1]{BauschkeNollPhan2015}. To the best of our knowledge, the concept of this type of operator goes back to Outlaw \cite[Theorem 2]{Outlaw1969}, and Petryshyn and Williamson \cite[Corollary 2.2]{PetryshynWilliamson1973}. A closely related condition called a \textit{linearly focusing} algorithm was studied by Bauschke and Borwein \cite[Definition 4.8]{BauschkeBorwein1996}. The concept of a focusing algorithm goes back to Fl\r{a}m and Zowe \cite[Section 2]{FlamZowe1990}, and can also be found in \cite[Definition 1.2]{Combettes1997} by Combettes. Linearly regular operators appeared in \cite[Definition 3.3]{CegielskiZalas2014} by Cegielski and Zalas as \textit{linearly shrinking} operators.

\begin{proposition}\label{th:DCBRLBR}
Let $U:\mathcal{H}\rightarrow \mathcal{H}$ be such that $\fix U$ is nonempty. Then the operator
\begin{enumerate}[(i)]
\item $U$ is boundedly regular whenever it is linearly boundedly regular.
\end{enumerate}
Moreover, if $\fix U$ is closed and convex, for example, when $U$ is quasi-nonexpansive, then
\begin{enumerate}[(i)]
\setcounter{enumi}{1}
\item $U$ is weakly regular whenever $U$ is boundedly regular;

\item $U$ is boundedly regular whenever $U$ is weakly regular and $\dim \mathcal{H}<\infty$.
\end{enumerate}
\end{proposition}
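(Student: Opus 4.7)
The proof splits naturally into three short arguments, one per item, each pushing a convergence hypothesis through the relevant definition.

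For (i), the plan is a one-liner. Assume $U$ is linearly boundedly regular and let $S\subseteq\mathcal{H}$ be bounded. Pick the constant $\delta_S>0$ supplied by Definition \ref{def:BR:oper}(iii). For any sequence $\{x^k\}_{k=0}^\infty\subseteq S$ with $\|Ux^k-x^k\|\to 0$, the inequality $\delta_S d(x^k,\fix U)\leq\|Ux^k-x^k\|$ forces $d(x^k,\fix U)\to 0$, which is exactly bounded regularity in the sense of Definition \ref{def:BR:oper}(ii).

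For (ii), I would use that $\fix U$ is closed and convex (hence weakly closed) together with the characterization of the metric projection. Take $\{x^k\}_{k=0}^\infty$ and $x^\infty\in\mathcal{H}$ with $x^k\rightharpoonup x^\infty$ and $Ux^k-x^k\to 0$. Weak convergence gives boundedness of $\{x^k\}_{k=0}^\infty$, so bounded regularity yields $d(x^k,\fix U)\to 0$. Setting $p^k:=P_{\fix U}x^k$, we have $\|x^k-p^k\|=d(x^k,\fix U)\to 0$, so $p^k\rightharpoonup x^\infty$ as well. Since $\fix U$ is closed and convex it is weakly sequentially closed, whence $x^\infty\in\fix U$; this is precisely weak regularity, i.e.\ Definition \ref{def:BR:oper}(i).

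For (iii), I would argue by contradiction using compactness of bounded sets in finite dimensions. Assume $U$ is weakly regular (equivalently, $U-\id$ is demi-closed at $0$), let $S\subseteq\mathcal{H}$ be bounded, and take $\{x^k\}_{k=0}^\infty\subseteq S$ with $\|Ux^k-x^k\|\to 0$. If $d(x^k,\fix U)\not\to 0$, pass to a subsequence $\{x^{k_j}\}$ along which $d(x^{k_j},\fix U)\geq\varepsilon>0$. Since $\dim\mathcal{H}<\infty$ and the sequence is bounded, extract a further subsequence (still denoted $\{x^{k_j}\}$) converging strongly, hence weakly, to some $x^\infty$. The tail condition $Ux^{k_j}-x^{k_j}\to 0$ is preserved, so weak regularity delivers $x^\infty\in\fix U$. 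But then $d(x^{k_j},\fix U)\leq\|x^{k_j}-x^\infty\|\to 0$, contradicting $d(x^{k_j},\fix U)\geq\varepsilon$. Hence $d(x^k,\fix U)\to 0$ and $U$ is boundedly regular.

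None of these steps should present real difficulty; the only mild subtlety is in (ii), where one must remember that weak closedness of $\fix U$ requires both closedness and convexity, which the hypothesis (e.g.\ $U$ being quasi-nonexpansive, by Remark \ref{rem:SQNEandMP}) supplies.
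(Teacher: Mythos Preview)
Your proof is correct and entirely standard; the paper itself does not give a self-contained argument but simply notes that (i) is immediate from Definition~\ref{def:BR:oper} and defers (ii) and (iii) to the arguments in \cite[Proposition~4.1]{CegielskiZalas2014}, which proceed along the same lines you outline. One minor remark: your argument for (iii) does not actually use the closedness or convexity of $\fix U$, so that hypothesis is superfluous there (it is genuinely needed only in (ii), for weak sequential closedness).
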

\begin{proof}
(i) follows directly from Definition \ref{def:BR:oper}. \ac{Parts (ii) and (iii) follow from the arguments used to prove \cite[Proposition 4.1]{CegielskiZalas2014}.}%ac
\end{proof}
It was shown by Opial \cite[Lemma 2]{Opial1967} that a nonexpansive (1-Lipschitz) mapping satisfies condition (\ref{eq:def:DC}) with $S=\mathcal H$. Therefore for $\mathcal H=\mathbb R^n$ a nonexpansive mapping is boundedly regular whenever it has a fixed point. It is worth mentioning that in a general Hilbert space the weak regularity of $U$ is only a necessary condition for implication (\ref{eq:def:BR:oper}) and even a firmly nonexpansive mapping may not have this property; see either \cite[Example 2.9]{Zalas2014} or \cite[Examples 2.17 and 2.18]{ReichZalas2016}.

We conclude this section with the following example.

\begin{example}\label{ex:subProj}
Let $f\colon\mathcal H\rightarrow \mathbb R$ be convex and continuous.
\begin{enumerate}[(a)]
  \item If $f$ is Lipschitz continuous on bounded sets, then $P_f$ is weakly regular; see, for example \cite[Theorem 4.2.7]{Cegielski2012}. We recall that $f$ is Lipschitz continuous on bounded sets $\Longleftrightarrow$ $f$ maps bounded sets onto bounded sets $\Longleftrightarrow$ $\partial f$ is uniformly bounded on bounded sets; see, for example, \cite[Proposition 7.8]{BauschkeBorwein1996}. All three conditions hold true if $\mathcal H=\mathbb R^n$.

  \item If $\mathcal H=\mathbb R^n$, then by (a) and Proposition \ref{th:DCBRLBR}, $P_f$ is boundedly regular. See also \cite[Lemma 24]{CegielskiZalas2013}.

  \item If $f(z)<0$ for some $z\in\mathbb R^n$, then $P_f$ is boundedly linearly regular. Indeed, by (a) and by Lemma \ref{th:Fukushima}, for every compact $K\subseteq\mathbb R^n$, there are $\delta_K, \Delta_K>0$ such that $\|\partial f(x)\|\leq \Delta_K$ and $\delta_K d(x, S(f,0))\leq f_+(x)$ for every $x\in K$. Therefore we can write
      \begin{equation}
      \|x-P_fx\| = \frac{f_+(x)}{\|g_f(x)\|}\geq \frac{\delta_K}{\Delta_K}d(x, S(f,0)).
      \end{equation}
\end{enumerate}
\end{example}

\section{Weak, strong and linear convergence results}\label{sec:conv}
Unless otherwise stated, we assume from now on that $C:=\bigcap_{i\in I}C_i \neq\emptyset$ and that for every $i\in I$, we have
\begin{equation}
C_i=\fix U_i=p_i^{-1}(0)\subseteq\mathcal H,
\end{equation}
where $U_i\colon\mathcal H\rightarrow\mathcal H$ is a cutter and $p_i\colon\mathcal H\rightarrow [0,\infty)$, $i\in I$.

\begin{theorem}[Weak and strong convergence]\label{th:conv}
Let the sequence $\{x^k\}_{k=0}^\infty$ be defined by the double-layer fixed point algorithm, that is,
\begin{equation}\label{eq:th:conv:xk}
x^0\in\mathcal H;\quad x^{k+1}:=x^k+\alpha_k \left (\sum_{i\in I_k}  \ac{\omega_i^k}%ac
 U_ix^k - x^k \right) \text{ for } k=0,1,2,\ldots,
\end{equation}
where $I_k\subseteq J_k\subseteq I$, $\alpha_k\in (0,2)$ and $\omega_i^k\in  (0,1]$ are such that $\sum_{i\in I_k}\omega_i^k=1$.
Assume that
\begin{enumerate}[(i)]
\item $\alpha_k\in[\alpha^-,\alpha^+]$ and $\omega^i_k\in[\omega^-, 1]$, where $\alpha^-, \omega^- \in (0,1]$ and $\alpha^+\in [1,2)$;

\item \ac{$I= J_k\cup\ldots\cup J_{k+s-1}$ }%ac
 for every $k=0,1,2,\ldots$ and some integer \ac{$s\geq 1$;}%ac

\item the inner and outer controls satisfy
\begin{equation}
\lim_{k\rightarrow\infty}\max_{i\in I_k}p_i(x^k)=0 \quad\Longrightarrow\quad \lim_{k\rightarrow\infty}\max_{j\in J_k}p_j(x^k)=0;\label{eq:th:conv:pipj}
\end{equation}

\item for every bounded sequence $\{y^k\}_{k=0}^\infty\subseteq\mathcal H$ and $i\in I$,
\begin{equation}
\lim_{k\rightarrow\infty}p_i(y^k)= 0 \quad\Longleftrightarrow \quad\lim_{k\rightarrow\infty}\|U_iy^k-y^k\|= 0.\label{eq:th:conv:piUi}
\end{equation}
\end{enumerate}

If  for each $i\in I$, the operator $U_i$ is weakly regular, then the sequence $\{x^k\}_{k=0}^\infty$ converges weakly to some point $x^\infty \in C$.
Moreover, if for each $i\in I$, the operator $U_i$ is boundedly regular and the family $\mathcal C:=\{\fix U_i\mid i\in I\}$ is boundedly regular, then the convergence to $x^\infty$ is in norm.
\end{theorem}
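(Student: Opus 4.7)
The plan is to run a standard Fej\'er-monotonicity argument, but with careful propagation of the convergence to proximities along the intermittent outer control.

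First I would establish Fej\'er monotonicity and extract $\|U_i x^k - x^k\|\to 0$ for indices in $I_k$. Since each $U_i$ is a cutter, Theorem \ref{th:SQNE:IneqConv} implies that for every $k$ the convex combination $U_k := \sum_{i\in I_k}\omega_i^k U_i$ is again a cutter with $\fix U_k \supseteq C$, and Theorem \ref{th:cuttersAndQNE} then gives
\begin{equation*}
\|x^{k+1}-z\|^2 \le \|x^k-z\|^2 - \alpha_k(2-\alpha_k)\|U_k x^k - x^k\|^2
\end{equation*}
for every $z\in C$. Telescoping and using $\alpha_k(2-\alpha_k)\ge \alpha^-(2-\alpha^+)>0$ from assumption (i), I get that $\{x^k\}$ is bounded and $\|U_k x^k - x^k\|\to 0$, whence also $\|x^{k+1}-x^k\|\to 0$. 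Inequality (\ref{eq:th:SQNE:IneqConv:2}) applied with an $R$ bounding $\|x^k-z\|$ yields $\sum_{i\in I_k}\omega_i^k\|U_i x^k - x^k\|^2\to 0$, and because $\omega_i^k\ge\omega^->0$ this forces $\max_{i\in I_k}\|U_i x^k - x^k\|\to 0$.

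Next I would transfer to the proximities and propagate from $I_k$ through $J_k$ to the whole of $I$. Boundedness of $\{x^k\}$ together with finiteness of $I$ lets me argue by contradiction (pigeonhole on which index attains the $\max$) and use the equivalence in (iv) to obtain $\max_{i\in I_k}p_i(x^k)\to 0$. Assumption (iii) then upgrades this to $\max_{j\in J_k}p_j(x^k)\to 0$. By the $s$-intermittent condition (ii), for every $i\in I$ and every $k$ there exists $\tau_i(k)\in\{0,\ldots,s-1\}$ with $i\in J_{k+\tau_i(k)}$; consequently $p_i(x^{k+\tau_i(k)})\to 0$, and applying (iv) again we get $\|U_i x^{k+\tau_i(k)} - x^{k+\tau_i(k)}\|\to 0$. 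Because $\|x^{k+1}-x^k\|\to 0$, the at-most-$(s-1)$-step shift is harmless: $\|x^{k+\tau_i(k)}-x^k\|\to 0$.

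For the weak convergence conclusion I would take any weak cluster point $x^\infty$ with $x^{k_l}\rightharpoonup x^\infty$, and for each fixed $i\in I$ extract a further subsequence on which $\tau_i(k_l)=t_i$ is constant. Then $x^{k_l+t_i}\rightharpoonup x^\infty$ and $\|U_i x^{k_l+t_i}-x^{k_l+t_i}\|\to 0$, so weak regularity of $U_i$ gives $x^\infty\in\fix U_i=C_i$. Doing this for each $i\in I$ places $x^\infty$ in $C$, so by Theorem \ref{th:Fejer:weak}(i) the whole sequence converges weakly to a single point of $C$. For the strong-convergence part I would use bounded regularity of $U_i$: from the bounded sequence $\{x^{k+\tau_i(k)}\}$ with $\|U_i x^{k+\tau_i(k)}-x^{k+\tau_i(k)}\|\to 0$ one obtains $d(x^{k+\tau_i(k)},C_i)\to 0$; combined with $\|x^{k+\tau_i(k)}-x^k\|\to 0$ this yields $d(x^k,C_i)\to 0$ for every $i$, hence $\max_{i\in I}d(x^k,C_i)\to 0$. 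Bounded regularity of $\mathcal C$ then gives $d(x^k,C)\to 0$, and strong convergence follows from Theorem \ref{th:Fejer:weak}(ii).

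The main obstacle is the bookkeeping required by the double-layer control: the direct energy estimate only delivers information at indices in $I_k$, and one has to push this through the two filters (iii) (from $I_k$ to $J_k$) and (ii) (from $J_k$ to all of $I$), using $\|x^{k+1}-x^k\|\to 0$ plus pigeonholing on the finite shift set $\{0,\ldots,s-1\}$ to absorb the time lags. Everything else is a fairly routine application of the Fej\'er machinery and of weak/bounded regularity.
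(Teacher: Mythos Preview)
Your proposal is correct and follows essentially the same route as the paper: establish Fej\'er monotonicity via the SQNE property of the relaxed convex combination, deduce $\|x^{k+1}-x^k\|\to 0$ and $\max_{i\in I_k}\|U_ix^k-x^k\|\to 0$ from Theorem~\ref{th:SQNE:IneqConv}, pass through (iv)--(iii)--(iv) to obtain $\max_{j\in J_k}\|U_jx^k-x^k\|\to 0$, and then use the $s$-intermittent control with bounded shifts to reach every index and invoke weak (respectively, bounded) regularity and Theorem~\ref{th:Fejer:weak}. Your explicit pigeonhole justification for applying (iv) with varying index sets is a nice clarification of a step the paper handles tersely; the additional extraction of a sub-subsequence on which $\tau_i(k_l)$ is constant is harmless but not needed, since $x^{k_l+\tau_i(k_l)}\rightharpoonup x^\infty$ already follows from $\|x^{k+1}-x^k\|\to 0$ and the bounded shift.
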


\begin{proof}
To show both weak and strong convergence, we apply Theorem \ref{th:Fejer:weak} (i) and (ii), respectively.

We first show that $\{x^k\}_{k=0}^\infty$ is Fej\'er monotone with respect to $C$. Indeed, for every $k=0,1,2,\ldots,$ let
\begin{equation} \label{eq:th:conv:proof:Tk}
T_k:=\sum_{i\in i_k}\omega_i^k\left(\id+\alpha_k(U_i-\id)\right).
\end{equation}
We can write $x^{k+1}=T_kx^k$. It is not difficult to see that by Theorems \ref{th:cuttersAndQNE} and \ref{th:SQNE:IneqConv}, $T_k$ is $\rho_k$-SQNE and $C\subseteq\fix T_k=\bigcap_{i\in I_k}\fix U_i$. Moreover, we have
\begin{equation} \label{eq:th:conv:proof:rho}
\rho_k=\frac{2-\alpha_k}{\alpha_k}\geq \frac{2-\alpha^+}{\alpha^+}:=\rho>0.
\end{equation}
In addition, for every $z\in C$, we get
\begin{equation} \label{eq:th:conv:proof:1}
\|x^{k+1}-z\|^2=\|T_kx^k-z\|^2\leq\|x^k-z\|^2-\rho \|T_k x^k-x^k\|^2 \leq \|x^k-z\|^2,
\end{equation}
which immediately implies that $\{x^k\}_{k=0}^\infty$ is Fej\'er monotone with respect to $C$.

Next we show that
\begin{equation} \label{eq:th:conv:proof:step2}
\lim_{k\rightarrow\infty} \max_{j\in J_k}\|U_jx^k-x^k\|=0.
\end{equation}
Note that since $\{x^k\}_{k=0}^\infty$ is Fej\'er monotone, it is bounded. Moreover, since the sequence $\{\|x^k-z\|\}_{k=0}^\infty$ is decreasing and bounded, it is convergent as well \ac{and, by (\ref{eq:th:conv:proof:rho})--(\ref{eq:th:conv:proof:1}),}%ac
\begin{equation}\label{eq:th:conv:proof:2}
\lim_{k\rightarrow\infty}\|T_kx^k-x^k\|=0,
\end{equation}
which is equivalent to
\begin{equation}\label{eq:th:conv:proof:3}
\lim_{k\rightarrow\infty}\|x^{k+1}-x^k\|=0.
\end{equation}
Let $z\in C$ and let $R>0$ be such that $\|x^k-z\|\leq R$. Using Theorem \ref{th:SQNE:IneqConv}, we get
\begin{align}\label{eq:th:conv:proof:4}
\|T_kx^k-x^k\|&\geq \frac{1}{2R}\sum_{i\in I_k}\omega^-\rho\|U_ix^k-x^k\|^2,
\end{align}
which together with (\ref{eq:th:conv:proof:2}) yields
\begin{equation} \label{eq:th:conv:proof:5}
\lim_{k\rightarrow\infty} \max_{i\in I_k}\|U_ix^k-x^k\|=0.
\end{equation}
Therefore, by (\ref{eq:th:conv:pipj}) and (\ref{eq:th:conv:piUi}), we see that
(\ref{eq:th:conv:proof:step2}) holds true, as asserted.

\textbf{Case 1.} Now assume that the operators $U_i$ are weakly regular, $i\in I$. Let $x$ be an arbitrary weak cluster point of $\{x^k\}_{k=0}^\infty$ and let $x^{n_k}\rightharpoonup x$. Let $i\in I$ be fixed and for every $k=0,1,2,\ldots$, let $r_k\in\{0,1,2,\ldots,s-1\}$ be the smallest number such that $i\in J_{n_k+r_k}$. By (\ref{eq:th:conv:proof:3}), $x^{n_k+r_k}\rightharpoonup x$ and by (\ref{eq:th:conv:proof:step2}),
\begin{equation}\label{eq:th:conv:proof:case1:1}
\lim_{k\rightarrow\infty} \|U_ix^{n_k+r_k}-x^{n_k+r_k}\|=0.
\end{equation}
Hence the weak regularity of $U_i$ yields that $x\in \fix U_i$ and the arbitrariness of $i$ implies that $x\in C$. Thus we have shown that each weak cluster point of $\{x^k\}_{k=0}^\infty$ lies in $C$, which, by Theorem \ref{th:Fejer:weak} (i), means that $\{x^k\}_{k=0}^\infty$ converges weakly to some point in $C$.

\textbf{Case 2.} Next we assume that the operators $U_i$ are boundedly regular, $i\in I$, and the family $\mathcal C$ is boundedly regular. By (\ref{eq:th:conv:proof:step2}),
\begin{equation} \label{eq:th:conv:proof:case2:1}
\lim_{k\rightarrow\infty} \max_{j\in J_k} d(x^k, C_j)=0.
\end{equation}
By Theorem \ref{th:Fejer:weak} (ii) and by the assumed bounded \ac{regularity of $\mathcal{C}$, }%ac
it suffices to show that
\begin{equation} \label{eq:th:conv:proof:case2:2}
\lim_{k\rightarrow\infty} \max_{i\in I} d(x^k, C_i)=0.
\end{equation}
Indeed, let $i\in I$ and let $r_k\in\{0,1,2,\ldots,s-1\}$ be the smallest integer such that $i\in J_{k+r_k}$. By (\ref{eq:th:conv:proof:case2:1}),
\begin{equation} \label{eq:th:conv:proof:case2:3}
\lim_{k\rightarrow\infty} d(x^{k+r_k}, C_i)=0.
\end{equation}
Moreover, by the triangle inequality and by the definition of the metric projection, we get
\begin{align}\label{eq:th:conv:proof:case2:4}
d(x^k, C_i)& = \|x^k-P_{C_i}x^k\|\leq \|x^k-P_{C_i}x^{k+r_k}\| \nonumber\\
& \leq \|x^k-x^{k+r_k}\|+\|x^{k+r_k}-P_{C_i}x^{k+r_k}\|,
\end{align}
which, when combined with (\ref{eq:th:conv:proof:3}) and (\ref{eq:th:conv:proof:case2:3}), yields
\begin{equation}\label{eq:th:conv:proof:case2:5}
\lim_{k\rightarrow\infty}d(x^k, C_i)=0.
\end{equation}
The arbitrariness of $i\in I$ implies (\ref{eq:th:conv:proof:case2:2}) which, in view of Theorem \ref{th:Fejer:weak} (ii), completes the proof.
\end{proof}

\begin{theorem}[Linear convergence]\label{th:rate}
Let the sequence $\{x^k\}_{k=0}^\infty$ be defined as in Theorem \ref{th:conv} under conditions (i) and (ii). Moreover, assume that

\begin{enumerate}[(i)]
\setcounter{enumi}{2}
\item the inner and outer controls satisfy for each $k=0,1,2,\ldots$,
\begin{equation}\label{eq:th:rate:IkJk}
I_k\cap \Argmax_{j\in J_k} p_j(x^k) \neq \emptyset;
\end{equation}

\item for $r=d(x^0,C)>0$, there are numbers $\delta_r, \Delta_r>0$ such that for every $x\in B(P_Cx^0,r)$ and $i\in I$,
\begin{equation}\label{eq:th:rate:dipiUi}
\delta_r d(x,C_i)\leq p_i(x) \leq \Delta_r \|U_ix-x\|.
\end{equation}
\end{enumerate}
If the family $\mathcal C:=\{\fix U_i\mid i\in I\}$ is $\kappa_r$-boundedly linearly  regular over $B(P_Cx^0,r)$, then the sequence $\{x^k\}_{k=0}^\infty$ converges linearly to some point $x^\infty\in C$, that is, $\|x^k-x^\infty\|\leq c_rq_r^k$, where
\begin{equation}\label{eq:th:rate:estimate}
c_r=\frac{2d(x^0,C)}
{ q_r^{s-1}}
\qquad\text{and}\qquad
q_r=\sqrt[\scriptstyle{2s}]{1-
\frac{\omega^-(2-\alpha^+)(\alpha^-)^2}{s \alpha^+}\cdot
\left(\frac{\delta_r}{\kappa_r\Delta_r }\right)^2}.
\end{equation}
\emph{(Error bound)} Moreover, we have the following estimate:
\begin{equation} \label{eq:th:rate:estimateEB}
\frac{\delta_r}{2\kappa_r}\|x^k-x^\infty\|\leq \max_{i\in I} p_i(x^k)\leq \Delta_r c_r q_r^k.
\end{equation}
\end{theorem}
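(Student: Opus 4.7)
The approach is to establish Fejér monotonicity of $\{x^k\}$ with respect to $C$, combine it with a per-cycle geometric contraction
\[
d(x^{k+s},C)^2 \le q_r^{2s}\,d(x^k,C)^2,
\]
and then feed this into Theorem~\ref{th:Fejer:weak}(iii) applied to the subsequence $\{x^{ks}\}_{k=0}^\infty$, followed by Theorem~\ref{th:Fejer:weak}(iv) to recover the rate for the full sequence. Fejér monotonicity is already produced in the proof of Theorem~\ref{th:conv} using only (i), and it confines the whole orbit to $B(P_Cx^0,r)$ with $r=d(x^0,C)$, so that hypothesis (iv) of Theorem~\ref{th:rate} applies at every iterate.

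For the single-step decrement I would mimic the proof of Theorem~\ref{th:conv}. Writing $T_k=\sum_{i\in I_k}\omega_i^k U_i^{\alpha_k}$ with the $\alpha_k$-relaxed cutters $U_i^{\alpha_k}=\id+\alpha_k(U_i-\id)$, which are $\rho_k$-SQNE for $\rho_k=(2-\alpha_k)/\alpha_k$ by Theorem~\ref{th:cuttersAndQNE}, inequality~(\ref{eq:th:SQNE:IneqConv:1}) of Theorem~\ref{th:SQNE:IneqConv} gives
\[
\|x^{k+1}-z\|^2 \le \|x^k-z\|^2 - \rho_k\alpha_k^2\sum_{i\in I_k}\omega_i^k \|U_ix^k-x^k\|^2
\]
for every $z\in C$. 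The bounds $\rho_k\ge\rho_0:=(2-\alpha^+)/\alpha^+$, $\alpha_k^2\ge(\alpha^-)^2$, $\omega_i^k\ge\omega^-$, together with (iii) (pick $i_k^*\in I_k\cap\Argmax_{j\in J_k}p_j(x^k)$) and (iv), then collapse the right hand side to
\[
\|x^{k+1}-z\|^2 \le \|x^k-z\|^2 - \mu\,M_k^2,\qquad M_k:=\max_{j\in J_k}d(x^k,C_j),\ \mu:=\omega^-\rho_0(\alpha^-)^2(\delta_r/\Delta_r)^2.
\]
Taking $z=P_Cx^k$ and summing over a block of length $s$ yields $d(x^{k+s},C)^2\le d(x^k,C)^2-\mu\sum_{l=0}^{s-1}M_{k+l}^2$, while the $\rho_0$-SQNE property of each $T_{k+l}$ gives the parallel displacement control $\sum_{l=0}^{s-1}\|x^{k+l+1}-x^{k+l}\|^2\le \rho_0^{-1}(d(x^k,C)^2-d(x^{k+s},C)^2)$.

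To convert the block sum of $M_{k+l}^2$ into a lower bound proportional to $d(x^k,C)^2/(s\kappa_r^2)$, I combine the $s$-intermittency (ii) with the bounded linear regularity of $\mathcal C$. For each $i\in I$, select $l(i)\in\{0,\ldots,s-1\}$ with $i\in J_{k+l(i)}$; the $1$-Lipschitz property of $d(\cdot,C_i)$ then yields $d(x^k,C_i)\le\|x^k-x^{k+l(i)}\|+M_{k+l(i)}$. Maximising in $i$ and invoking $d(x^k,C)\le\kappa_r\max_id(x^k,C_i)$ produces
\[
\frac{1}{\kappa_r}\,d(x^k,C)\le\max_{0\le l<s}\|x^k-x^{k+l}\|+\sum_{l=0}^{s-1}M_{k+l}.
\]
Applying Cauchy--Schwarz to both terms on the right and absorbing the displacement part via the SQNE estimate of the previous paragraph produces the claimed per-cycle contraction with $q_r^{2s}=1-\mu/(s\kappa_r^2)$, after which Theorem~\ref{th:Fejer:weak}(iii)--(iv) complete the proof of the rate.

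The error bound (\ref{eq:th:rate:estimateEB}) is a routine consequence. The lower inequality combines $p_i(x^k)\ge\delta_r d(x^k,C_i)$ from (iv) with $\max_i d(x^k,C_i)\ge d(x^k,C)/\kappa_r$ and $\|x^k-x^\infty\|\le 2d(x^k,C)$ from Theorem~\ref{th:Fejer:weak}(v). The upper inequality uses the cutter property $\|U_ix^k-x^k\|\le d(x^k,C_i)$ (which follows by expanding $\|x^k-z\|^2\ge\|U_ix^k-x^k\|^2+\|U_ix^k-z\|^2$ for $z\in C_i$) together with $d(x^k,C_i)\le d(x^k,C)\le\|x^k-x^\infty\|\le c_rq_r^k$ and $p_i(x^k)\le\Delta_r\|U_ix^k-x^k\|$. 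The main technical obstacle I anticipate is the block regularity step just sketched: carefully book-keeping the intermediate displacements $\|x^k-x^{k+l}\|$ and extracting exactly the Cauchy--Schwarz prefactor $1/s$ with the advertised constant $\mu$ requires some delicate algebra.
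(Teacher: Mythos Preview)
Your overall architecture is exactly the paper's: Fej\'er monotonicity confines the orbit to $B(P_Cx^0,r)$, a per-cycle contraction $d(x^{(k+1)s},C)^2\le q_r^{2s}\,d(x^{ks},C)^2$ is established, and Theorem~\ref{th:Fejer:weak}(iii)--(iv) finish the rate; the error bound argument is identical to the paper's.

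The one place where your sketch does not deliver the stated constant is the ``block regularity step'' you flag yourself. Combining the two pieces via
\[
\frac{1}{\kappa_r}\,d(x^k,C)\le\max_{0\le l<s}\|x^k-x^{k+l}\|+\sum_{l=0}^{s-1}M_{k+l}
\]
and then squaring forces $(A+B)^2\le 2(A^2+B^2)$, and summing \emph{all} of $M_{k},\ldots,M_{k+s-1}$ is wasteful: after Cauchy--Schwarz and the two separate telescoping bounds you end up with a contraction factor of the form $1-\mu/\bigl(2s\kappa_r^2(1+\mu/\rho_0)\bigr)$, off by a factor $\ge 2$ from the claimed $1-\mu/(s\kappa_r^2)$.

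The paper avoids this loss by never separating the two contributions. For each fixed $\nu\in I$, pick the first $m_k\in\{ks,\ldots,(k+1)s-1\}$ with $\nu\in J_{m_k}$ and write
\[
d(x^{ks},C_\nu)\le\sum_{n=ks}^{m_k-1}\|x^{n+1}-x^n\|+d(x^{m_k},C_\nu),
\]
a sum of at most $s$ terms. Now apply Cauchy--Schwarz once to this single sum. The last term is controlled by $(\Delta_r/\delta_r)\,\|U_{i_k}x^{m_k}-x^{m_k}\|$ via (iii)--(iv), and since $\Delta_r/\delta_r\ge 1$ one may multiply the entire sum by this factor. The crucial observation is that after loosening $\rho_0^{-1}$ to $(\omega^-\rho_0(\alpha^-)^2)^{-1}$ in the SQNE displacement bound, both the displacement squares $\|x^{n+1}-x^n\|^2$ and the single term $\|U_{i_k}x^{m_k}-x^{m_k}\|^2$ are dominated by telescoping differences $\|x^n-z\|^2-\|x^{n+1}-z\|^2$ with the \emph{same} prefactor, so the whole bracket collapses to $(\omega^-\rho_0(\alpha^-)^2)^{-1}\bigl(\|x^{ks}-z\|^2-\|x^{(k+1)s}-z\|^2\bigr)$. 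This delivers exactly $q_r^{2s}=1-\mu/(s\kappa_r^2)$ with no spurious factor of $2$.
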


\begin{proof}
For every $k=0,1,2,\ldots$, let $T_k$ be defined as in the proof of Theorem \ref{th:conv}, that is,
\begin{equation} \label{eq:th:rate:proof:Tk}
T_k=\sum_{i\in I_k}\omega_i^k\left(\id+\alpha_k(U_i-\id)\right).
\end{equation}
As before, we can write that $x^{k+1}=T_kx^k$ knowing, in addition, that $T_k$ is $\rho_k$-SQNE with $\fix T_k=\bigcap_{i\in I_k}\fix U_i$. Moreover, we can estimate
\begin{equation}\label{eq:th:rate:proof:rho}
\rho^-:=\frac{2-\alpha^+}{\alpha^+}\leq\rho_k \leq \frac{2-\alpha^-}{\alpha^-}:=\rho^+.
\end{equation}
Furthermore, we see that $\id+\alpha_k(U_i-\id)$ is $\rho_k$-SQNE as well. We divide the rest of the proof into three steps.

\textbf{Step 1.} First, we show that for every $\nu\in I$, $z\in C$, and $k=0,1,2,\ldots,$ we have
\begin{equation}\label{eq:th:rate:proof:step1}
d^{2}(x^{ks},C_\nu)\leq \frac {s} {\omega^-\rho^-(\alpha^-)^2} \left(\frac{\Delta_r }{\delta_r}\right)^2\left(\| x^{ks}-z\|^{2}-\| x^{(k+1)s}-z\|^{2}\right),
\end{equation}
Indeed, let $\nu \in I$, $z\in C$ and for every $k=0,1,2,\dots,$ let $m_k\in\{ks,\ldots,(k+1)s-1\}$ be the smallest integer such that $\nu\in J_{m_k}$. Define
\begin{equation}\label{eq:th:rate:proof:ikdef}
i_k:=\argmax_{j\in I_{m_k}} p_j (x^{m_k}),
\end{equation}
which, by (\ref{eq:th:rate:IkJk}), satisfies
\begin{equation}\label{eq:th:rate:proof:ik}
i_k\in\Argmax_{j\in J_{m_k}} p_j (x^{m_k}).
\end{equation}
Since the function $d(\cdot,C_\nu)$ is nonexpansive, we have
\begin{equation}
d(x^{ks},C_\nu)-d(x^{m_k},C_\nu)\leq |d(x^{ks},C_\nu)-d(x^{m_k},C_\nu)|  \leq \| x^{ks}-x^{m_k}\|
\end{equation}
and therefore
\begin{equation}\label{eq:th:rate:proof:1}
d(x^{ks},C_\nu) \leq \| x^{ks}-x^{m_k}\| + d(x^{m_k},C_\nu)\leq \sum_{n=ks}^{m_k-1}\|x^{n+1}-x^n\| + d(x^{m_k},C_\nu),
\end{equation}
where the last step follows from the triangle inequality.
For $r=d(x^0,C)$, let $\delta_r$ and $\Delta_r$ be as in (\ref{eq:th:rate:dipiUi}). Note that since $\{x^k\}_{k=0}^\infty$ is Fej\'er monotone, $\{x^k\}_{k=0}^\infty\subseteq B(P_Cx^0,r)$ and consequently, for every $i\in I$ and $k=0,1,2,\ldots,$
\begin{equation}\label{eq:th:rate:proof:dipiuixk}
\delta_r d(x^k,C_i)\leq p_i(x^k) \leq \Delta_r \|U_ix^k-x^k\|.
\end{equation}
By the definition of $i_k$ (\ref{eq:th:rate:proof:ik}) and by (\ref{eq:th:rate:proof:dipiuixk}), we get
\begin{equation}\label{eq:th:rate:proof:nuik}
d(x^{m_k},C_\nu)\leq \frac{1}{\delta_r}p_\nu(x^{m_k})\leq \frac{1}{\delta_r}p_{i_k}(x^{m_k})\leq\frac{\Delta_r}{\delta_r}\|U_{i_k}x^{m_k}-x^{m_k}\|.
\end{equation}
Now, by combining (\ref{eq:th:rate:proof:1}), \ac{the Cauchy-Schwarz inequality for real numbers }%ac
and (\ref{eq:th:rate:proof:nuik}),
\begin{align}\label{eq:th:rate:proof:2}
d^2(x^{ks},C_\nu) &\leq \left(\sum_{n=ks}^{m_k-1}\|x^n-x^{n+1}\| + d(x^{m_k},C_\nu)\right)^2\nonumber \\
&\leq (m_k-ks+1)\left(\sum_{n=ks}^{m_k-1}\|x^n-x^{n+1}\|^2 + d^2(x^{m_k},C_\nu) \right) \nonumber\\
&\leq s\left(\frac{\Delta_r}{\delta_r}\right)^2 \left(\sum_{n=ks}^{m_k-1}\|x^n-x^{n+1}\|^2 +  \|U_{i_k} x^{m_k}-x^{m_k}\|^2 \right),
\end{align}
where the last inequality follows from the fact that $\Delta_r\geq \delta_r$. Indeed, since $U_i$ is a cutter, using (\ref{eq:th:rate:dipiUi}), we get for every $x\in B(P_Cx^0,r)$ that $\|U_ix-x\|\leq d(x,C_i)\leq \Delta_r/\delta_r \|U_ix-x\|$.

We now estimate the first term in (\ref{eq:th:rate:proof:2}). By the $\rho_k$-strong quasi-nonexpansivity of $T_k$ and (\ref{eq:th:rate:proof:rho}), we get for all $n=0,1,2,\ldots,$
\begin{equation}\label{eq:th:rate:sqne}
\|x^n-x^{n+1}\|^2\leq\frac{1}{\rho^-}\left(\|x^n-z\|^2-\|x^{n+1}-z\|^2\right).
\end{equation}
Substituting (\ref{eq:th:rate:sqne}) into the first term produces the following estimates:
\begin{align}\label{eq:th:rate:proof:3}
\sum_{n=ks}^{m_k-1}\|x^n-x^{n+1}\|^2&\leq\sum_{n=ks}^{m_k-1}\frac{1}{\rho^-}\left(\|x^n-z\|^2-\|x^{n+1}-z\|^2\right)\nonumber \\
&=\frac{1}{\rho^-}\left(\|x^{ks}-z\|^2-\|x^{m_k}-z\|^2\right)\nonumber \\
&\leq \frac{1}{\omega^- \rho^- (\alpha^-)^2}\left(\|x^{ks}-z\|^2-\|x^{m_k}-z\|^2\right).
\end{align}
Next, we estimate the second summand in (\ref{eq:th:rate:proof:2}). Recall that, by definition, $x^{m_k+1}= T_{m_k} x^{m_k}$. Therefore, by Theorem \ref{th:SQNE:IneqConv} applied to $x=x^{m_k}$ and $U=T_k$, and by the Fej\'er monotonicity of $\{x^k\}_{k=0}^\infty$, we have
\begin{align}\label{eq:th:rate:proof:4}
\omega^-\rho^-(\alpha^-)^2\|U_{i_{m_k}} x^{m_k}-x^{m_k}\|^2 & \leq
\| x^{m_k}-z\|^2 - \|T_{m_k}x^{m_k} -z \|^2 \nonumber\\
&= \| x^{m_{k}}-z\|^{2}-\| x^{m_{k}+1}-z\|^{2} \nonumber\\
&\leq \| x^{m_k}-z\|^{2}-\| x^{(k+1)s}-z\|^{2} .
\end{align}
Combining (\ref{eq:th:rate:proof:3}) and (\ref{eq:th:rate:proof:4}) with (\ref{eq:th:rate:proof:2}), we derive (\ref{eq:th:rate:proof:step1}), as asserted.

\textbf{Step 2.} Now we show that \ac{the estimate}%ac
\begin{equation}\label{eq:th:rate:proof:estimate}
\|x^k-x^\infty\|\leq c_rq_r^k
\end{equation}
holds true. Indeed, by setting $z=P_{C}x^{ks}$ in (\ref{eq:th:rate:proof:step1}) and by noting that
\begin{equation}
\| x^{(k+1)s}-P_{C}x^{(k+1)s}\|\leq\| x^{(k+1)s}-P_{C}x^{ks}\|,
\end{equation}
we get
\begin{equation}\label{eq:th:rate:proof:5}
d^{2}(x^{ks},C_\nu)\leq \frac {s} {\omega^-\rho^-(\alpha^-)^2} \left(\frac{\Delta_r }{\delta_r}\right)^2 \left(d^{2}(x^{ks},C)-d^{2}(x^{(k+1)s},C) \right).
\end{equation}
Since (\ref{eq:th:rate:proof:5}) was proved for arbitrary $\nu \in I$, when we combine this with the bounded linear regularity of the family $\mathcal C$, we get
\begin{align}
d^{2}(x^{ks},C) &\leq \kappa_r^{2} \max_{i\in I} d^2(x^{k},C_{i}) \nonumber\\
&\leq  \frac {s} {\omega^-\rho^-(\alpha^-)^2} \left(\frac{\kappa_r\Delta_r }{\delta_r}\right)^2 \left(d^{2}(x^{ks},C)-d^{2}(x^{(k+1)s},C)\right),
\end{align}
which when rearranged, leads to
\begin{equation}
d^{2}(x^{(k+1)s},C) \leq
\left(1-
\frac{\omega^-\rho^-(\alpha^-)^2}{s}\cdot
\left(\frac{\delta_r}{\kappa_r\Delta_r }\right)^2
\right)d^{2}(x^{ks},C).
\end{equation}
By applying Theorem \ref{th:Fejer:weak} (iii), we see that the subsequence $\{x^{ks}\}_{k=0}^\infty$ converges linearly and
\begin{equation}
\| x^{ks}-x^\infty\|\leq 2 d(x^0,C) \left(1-
\frac{\omega^-\rho^-(\alpha^-)^2}{s}\cdot
\left(\frac{\delta_r}{\kappa_r\Delta_r }\right)^2
\right)^{\frac{k}{2}}.
\end{equation}
From  Theorem \ref{th:Fejer:weak} (iv), it follows that the entire sequence $\{x^k\}_{k=0}^\infty$ converges linearly and, in addition, (\ref{eq:th:rate:proof:estimate}) holds true.

\textbf{Step 3.} Next we show that estimate (\ref{eq:th:rate:estimateEB}) holds true. Indeed, by combining Theorem \ref{th:Fejer:weak} (v) with the bounded linear regularity of the family $\mathcal C$ and (\ref{eq:th:rate:dipiUi}), one observes that
\begin{align}\nonumber
\|x^k-x^\infty\|&\leq 2d(x^k,C)\leq 2 \kappa_r \max_{i\in I}d(x^k,C_i)\\
\nonumber
& \leq \frac{2\kappa_r}{\delta_r}\max_{i\in I}p_i(x^k) \leq \frac{2\Delta_r\kappa_r}{\delta_r}\max_{i\in I}\|U_ix^k-x^k\|\\
\nonumber
& \leq \frac{2\Delta_r\kappa_r}{\delta_r}\max_{i\in I}d(x^k,C_i) \leq \frac{2\Delta_r\kappa_r}{\delta_r} d(x^k,C)\\
& \leq \frac{2\Delta_r\kappa_r}{\delta_r} \|x^k-x^\infty\|,
\end{align}
which, when combined with (\ref{eq:th:rate:estimate}), completes the proof.
\end{proof}

\ac{
\begin{remark}
We show that condition (30) is more general than (46). For simplicity, assume that $U_i=P_{C_i}$, $p_i=d(\cdot,C_i)$, $J_k=I$, $I_{2k}=I$ and $I_{2k+1}$ is arbitrary for every $k=0,1,2,\ldots$. In particular, we can always choose the inner block such that $I_{2k+1}\cap \Argmax_{j\in I}p_j(x^{2k+1})=\emptyset$. Clearly, condition (46) does not hold. Now we claim that (30) does hold in this case. Indeed, assume that $\max_{i\in I_k}d(x^k,C_i)\rightarrow_k 0$. We have to show that $\max_{i\in I}d(x^k,C_i)\rightarrow_k 0$, which by the definition of $I_k$ and (35) is already true for even $k$'s. We have
\begin{align}
d(x^{2k+1}, C_i) &= \|x^{2k+1}-P_{C_i}x^{2k+1}\| \leq \|x^{2k+1}-P_{C_i}x^{2k}\|\\
&\leq \|x^{2k+1}-x^{2k}\|+\|x^{2k}-P_{C_i}x^{2k}\|
\end{align}
and the right-hand side of the above inequality converges to zero since $\|x^{k+1}-x^k\|\rightarrow 0$ by (37). Note that (35) and (37) do not depend on (30).
\end{remark}

Assume that $p_i(x)=\|U_ix-x\|$ for every $i\in I$. Clearly, in this case condition (31) is satisfied and condition (30) takes the following form:
\begin{equation}\label{eq:th:conv:BR:IkJk}
\lim_{k\rightarrow\infty}\max_{i\in I_k}\|U_ix^k-x^k\|=0 \quad\Longrightarrow\quad \lim_{k\rightarrow\infty}\max_{j\in J_k}\|U_jx^k-x^k\|=0.
\end{equation}
Consequently we can simplify the statement of Theorem 16 to achieve weak and strong convergence while using weakly and boundedly regular operators. Moreover, since we can take $\Delta_r=1$ in (47), we can replace this condition by the following inequality: $\delta_r d(x,\fix U_i)\leq \|U_ix^k-x^k\|$. Note that the existence of $\delta_r$ can be guaranteed by assuming that every $U_i$ is boundedly linearly regular and therefore we can obtain a special case of Theorem 17. Other particular variants of Theorems 16 and 17 can be obtained by considering projection and subgradient projection methods which we discuss in more detail in Sections 3.1 and 3.2.

Note that, in general, the parameters $\delta_r, \Delta_r$ and $\kappa_r$ are not known explicitly. Nevertheless, formula (48) still enables us to compare $q_r$'s related to different methods. We find this observation important since $q_r$ characterizes convergence speed in terms of iterations. In particular, by using (48) one can deduce which operations may reduce $q_r$ and as a consequence, which methods should be faster. Moreover, one can deduce a natural stopping rule from (49) in terms of maximum proximity. Note that the maximum proximity which appears in (49) is entirely computable in contrast with $c_r$ and $q_r$. In the following example we show how one can compute $q_r$.
}%ac

\begin{example}\label{ex:FPA}
For simplicity, let $I=J_1\cup\ldots\cup J_s$ and let the outer control $\{J_{[k]}\}_{k=0}^\infty$ change cyclically with the constant size of the outer block, that is, $[k]:=(k \text{ mod } s)+1$ and $|J_{[k]}|=b$ for every $k=0,1,2,\ldots$ and some integer $b$. \ac{Moreover, assume that $m=s\cdot b$. Also, let }%ac
$\alpha_k=1$ and $\omega_i^k=1/|I_k|$ for every $k=0,1,2,\ldots$. We now present several fixed point algorithms (FPA) which satisfy the inner-outer control relation defined in (\ref{eq:th:rate:IkJk}) and thus also in (\ref{eq:th:conv:pipj}). In addition, by assuming the existence of constants $\kappa_r, \delta_r$ and $\Delta_r$, and by Theorem \ref{th:rate}, we may deduce the upper bound for the coefficient $q_r$.
\begin{enumerate}[(a)]
  \item Cyclic FPA: $x^{k+1}:=U_{[k]}x^k,$ where $s=m$ and
  \begin{equation}\label{eq:qr:cyclic}
  q_r=\sqrt[2m]{1-\frac{1}{m}\left(\frac{\delta_r}{\Delta_r \kappa_r}\right)^2}.
  \end{equation}

  \item Simultaneous FPA: $x^{k+1}:=\frac 1 b\sum_{i\in J_{[k]}} U_ix^k$, where
  \begin{equation}\label{eq:qr:simultaneous}
  q_r=\sqrt[\frac{2m}{b}]{1-\frac{1}{m}\left(\frac{\delta_r}{\Delta_r \kappa_r}\right)^2}.
  \end{equation}

  \item Simultaneous FPA with active sets: $ x^{k+1}:=\frac 1 {|I_k|}\sum_{i\in I_k} U_ix^k,$ where $I_k:=\{i\in J_{[k]}\mid p_i(x^k)>0\}$. Since we do not control the number of violated constraints, the upper bound for $q_r$ is the same as in (\ref{eq:qr:simultaneous}).

  \item Maximum proximity FPA: $x^{k+1}:= U_{i_k}x^k$,  where $i_k= \argmax_{j\in J_{[k]}}p_j(x^k)$ and
  \begin{equation}\label{eq:qr:maxprox}
  q_r=\sqrt[\frac{2m}{b}]{1-\frac{b}{m}\left(\frac{\delta_r}{\Delta_r \kappa_r}\right)^2}.
  \end{equation}

  \item Simultaneous FPA with the inner block of a fixed size: $ x^{k+1}:=\frac 1 t \sum_{i\in I_k} U_ix^k,$ where $I_k:=\{t$ smallest indices from $J_{[k]}$ with the largest proximity $p_j(x^k) \}$. We have
  \begin{equation}\label{eq:qr:simultaneous:t}
  q_r=\sqrt[\frac{2m}{b}]{1-\frac{b}{mt}\left(\frac{\delta_r}{\Delta_r \kappa_r}\right)^2}.
  \end{equation}

  \item Simultaneous FPA with the inner block determined by the threshold: $x^{k+1}:=\frac 1 {|I_k|}\sum_{i\in I_k} U_ix^k,$ where $I_k:=\{i\in J_{[k]}\mid p_i(x^k)\geq t \max_{j\in J_{[k]}} p_j(x^k)\}$ and $t\in [0,1]$. Again, since we do not control the number of constraints for which the proximity is above the threshold, we can only estimate the upper bound for $q_r$ by using (\ref{eq:qr:simultaneous:t}) and taking $t$ there to equal $b$. This leads to the same formula as in (\ref{eq:qr:simultaneous}).

\end{enumerate}
\end{example}

Notice that for all of the above-mentioned methods one could set $s=1$. Thus $b=m$ and $J_{[k]}=I$. In particular, for the maximum proximity FPA we get
\begin{equation}\label{eq:qr:maxprox:basic}
  q_r=\sqrt{1-\left(\frac{\delta_r}{\Delta_r \kappa_r}\right)^2}.
  \end{equation}
As a matter of fact, this is the best estimate that can be derived in our setting by using formula (\ref{eq:th:rate:estimate}). Nevertheless, the case where $J_{[k]}\subseteq I$ seems to be more applicable. We would like to emphasize that in cases (d)--(f), the $q_r$ is a decreasing function of $b$. Therefore, increasing the block size $b$ should speed up the convergence in terms of the iteration $k$, but at the cost of computational time. Moreover, the $q_r$ from case (e) is an increasing function of $t$, which implies that by considering more information from the outer block we may, in fact, reduce the convergence speed. This, when combined with (\ref{eq:qr:maxprox}), suggests that the maximum proximity FPA should perform in the best way out of all variants described in (a)--(f). Another observation is that by manipulating the parameter $t$ with fixed $b$, in cases (e) and (f), we may approach either the simultaneous or maximum proximity FPA from cases (b) and (d). Finally, we conjecture at this point that the maximum proximity FPA has the same convergence properties with $b<m$ as it has in the case of $b=m$ \ac{in terms of iterations. }%ac
All of this we verify numerically in the last section of our paper.

\begin{remark} \label{rem:Bauschke}
By definition, our double-layer fixed point algorithm requires cutters as the input operators. Nevertheless, this does not limit our generality. For example, following Bauschke, Noll and Phan \ac{\cite[Theorem 6.1]{BauschkeNollPhan2015}, }%ac
we assume we want to use averaged operators $V_i$ in the framework of the one-layer ($I_k=J_k$) simultaneous fixed point algorithm. That is, we let the sequence $\{x^k\}_{k=0}^\infty$ be defined in the following way:
\begin{equation}\label{eq:rem:conv:xk}
x^0\in\mathcal H;\quad x^{k+1}=\sum_{i\in I_k}\lambda_i^kV_ix^k \text{ for } k=0,1,2,\ldots,
\end{equation}
where $I_k\subseteq I$ and \ac{$\lambda_i^k\in  (0,1)$ }%ac
are such that $\sum_{i\in I_k}\lambda_i^k=1$. Notice that since for every $i\in I$, the operator $V_i$ is averaged, it can be written as $V_i=\id+\eta_i (W_i-\id)$, where \ac{$\eta_i\in (0,1)$ }%ac
and $W_i$ is nonexpansive. Thus, by substituting
\begin{equation}
\alpha_k:=2\underset{i\in I}{\sum}\lambda_{i}^{k}\eta_i, \qquad
\omega_{i}^{k}:=\frac{\lambda_{i}^{k}\eta_i}{\underset{j\in I}{\sum}\lambda_{j}^{k}\eta_j}\qquad \text{and} \qquad
U_i:=\frac{1}{2}(W_i+\id)= \id+\frac{1}{2\eta_i}(V_i-\id)
\end{equation}
into (\ref{eq:th:conv:xk}), we recover (\ref{eq:rem:conv:xk}), where every $U_i$ is a cutter, \ac{$\alpha_k\in(0,2)$ }%ac
and $\sum_{i\in I}\omega_i^k=1$. \ac{This also corresponds to \cite[Theorem 3.6]{BorweinLiTam2015}.}%ac
\end{remark}

\begin{remark}
Let $\{x^k\}_{k=0}^\infty$ be Fej\'er monotone with respect to $C$. If $C$ is an affine subspace and all weak cluster points of $\{x^k\}_{k=0}^\infty$ belong to $C$, then, by \cite[Fact 5.3 (ii)]{BauschkeNollPhan2015}, $x^k\rightharpoonup x^\infty=P_Cx^0$. Consequently, if every $C_i$ in Theorem \ref{th:conv} is an affine subspace, then the sequence defined in (\ref{eq:th:conv:xk}) converges weakly to $x^\infty=P_Cx^0$. Moreover, this convergence turns out to be either strong or a linear one if $\mathcal C$ is either boundedly regular or boundedly linearly regular, respectively.
\end{remark}

\subsection{Projection algorithms}\label{sec:conv:proj}
Unless otherwise stated, from now on we assume that $C:=\bigcap_{i\in I}C_i \neq\emptyset$ and $C_i\subseteq\mathcal H$ is closed and convex for every $i\in I$.

\begin{corollary}[Weak and strong convergence]\label{th:conv:proj}
Let the sequence $\{x^k\}_{k=0}^\infty$ be defined by the double-layer  projection algorithm, that is,
\begin{equation}\label{eq:th:conv:proj:xk}
x^0\in\mathcal H;\quad x^{k+1}:=x^k+\alpha_k \left (\sum_{i\in I_k}  \ac{\omega_i^k}%ac
 P_{C_i}x^k - x^k \right) \text{ for } k=0,1,2,\ldots,
\end{equation}
where $I_k\subseteq J_k\subseteq I$, $\alpha_k\in (0,2)$ and $\omega_i^k\in  (0,1]$ are such that $\sum_{i\in I_k}\omega_i^k=1$.
Assume that
\begin{enumerate}[(i)]
\item $\alpha_k\in[\alpha^-,\alpha^+]$ and $\omega^i_k\in[\omega^-, 1]$, where $\alpha^-, \omega^- \in (0,1]$ and $\alpha^+\in [1,2)$;

\item \ac{$I= J_k\cup\ldots\cup J_{k+s-1}$ }%ac
for every $k=0,1,2,\ldots$ and some integer \ac{$s\geq 1$;}%ac

\item the inner and outer controls satisfy
\begin{equation}\label{eq:th:conv:proj:pipj}
\lim_{k\rightarrow\infty}\max_{i\in I_k}d(x^k,C_i)=0 \quad\Longrightarrow\quad \lim_{k\rightarrow\infty}\max_{j\in J_k}d(x^k,C_j)=0;
\end{equation}

\end{enumerate}
Then the sequence $\{x^k\}_{k=0}^\infty$ converges weakly to some point $x^\infty \in C$. Moreover, if the family $\mathcal C:=\{C_i\mid i\in I\}$ is boundedly regular, then the convergence to $x^\infty$ is in norm.

\end{corollary}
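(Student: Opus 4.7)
The plan is to derive this corollary directly from Theorem \ref{th:conv} by making the natural choice $U_i := P_{C_i}$ and $p_i := d(\cdot, C_i)$ for every $i \in I$. All the structural hypotheses of the general framework fall into place: the metric projection $P_{C_i}$ is a cutter (by the variational characterization of $P_{C_i}$), its fixed point set equals $C_i$, and $C_i = p_i^{-1}(0)$ by definition of the distance function. Moreover, conditions (i) and (ii) of Theorem \ref{th:conv} match the corresponding assumptions of the corollary verbatim.

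Next I would verify the two control-binding conditions (iii)--(iv) of Theorem \ref{th:conv}. Condition (iii) becomes exactly the hypothesis (\ref{eq:th:conv:proj:pipj}) of the corollary, since $p_i = d(\cdot, C_i)$. Condition (iv) is even more transparent: because $\|P_{C_i} y - y\| = d(y, C_i) = p_i(y)$ for every $y \in \mathcal H$, the implication (\ref{eq:th:conv:piUi}) holds trivially for any sequence (not merely a bounded one), with equality of the two quantities.

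For the weak convergence conclusion, Theorem \ref{th:conv} demands weak regularity of each $U_i$. This is classical: $P_{C_i}$ is (firmly) nonexpansive, and by Opial's demi-closedness principle \cite{Opial1967}, $P_{C_i} - \id$ is demi-closed at $0$, i.e.\ $P_{C_i}$ is weakly regular in the sense of Definition \ref{def:BR:oper}(i). Applying the first part of Theorem \ref{th:conv} therefore yields $x^k \rightharpoonup x^\infty \in C$.

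For the norm convergence conclusion under bounded regularity of $\mathcal C$, Theorem \ref{th:conv} additionally requires that each $U_i$ be boundedly regular. Here the choice $U_i = P_{C_i}$ pays a second dividend: the identity $\|P_{C_i} x - x\| = d(x, C_i) = d(x, \fix P_{C_i})$ shows that each $P_{C_i}$ is in fact linearly regular with constant $\delta = 1$, hence a fortiori boundedly regular. The second part of Theorem \ref{th:conv} then delivers strong convergence. There is no real obstacle in this proof; the only point worth highlighting is that the displacement of a metric projection coincides exactly with the distance to the underlying set, which makes hypotheses (iv) and the operator-side bounded regularity automatic.
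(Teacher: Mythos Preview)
Your proposal is correct and follows exactly the paper's own approach: the paper's proof consists of the single sentence ``This result follows from Theorem~\ref{th:conv}, where it suffices to substitute $U_i=P_{C_i}$ and $p_i(x)=d(x,C_i)$.'' You have simply spelled out the routine verifications (cutter property, $\fix P_{C_i}=C_i$, the identity $\|P_{C_i}x-x\|=d(x,C_i)$ making condition~(iv) and bounded regularity of $P_{C_i}$ automatic, and weak regularity via Opial's demi-closedness) that the paper leaves implicit.
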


\begin{proof}
This result follows from Theorem \ref{th:conv}, where it suffices to substitute $U_i=P_{C_i}$ and $p_i(x)=d(x,C_i)$.
\end{proof}

\begin{corollary}[Linear convergence]\label{th:rate:proj}
Let the sequence $\{x^k\}_{k=0}^\infty$ be defined as in \ac{Corollary \ref{th:conv:proj} }%ac
under conditions (i) and (ii). Moreover, assume that

\begin{enumerate}[(i)]
\setcounter{enumi}{2}
\item the inner and outer controls satisfy, for each $k=0,1,2,\ldots$,
\begin{equation}\label{eq:th:rate:sub:IkJk}
I_k\cap \Argmax_{j\in J_k} d(x^k, C_j) \neq \emptyset;
\end{equation}
\end{enumerate}
If the family \ac{$\mathcal C:=\{C_i\mid i\in I\}$ }%ac
is $\kappa_r$-boundedly linearly  regular over $B(P_Cx^0,r)$, then the sequence $\{x^k\}_{k=0}^\infty$ converges linearly to some point $x^\infty\in C$, that is, $\|x^k-x^\infty\|\leq c_rq_r^k$, where
\begin{equation}\label{eq:th:rate:sup:estimate}
c_r=\frac{2d(x^0,C)}
{ q_r^{s-1}}
\qquad\text{and}\qquad
q_r=\sqrt[\scriptstyle{2s}]{1-\frac{\omega^-(2-\alpha^+)(\alpha^-)^2}{s\alpha^+}\cdot
\frac{1}{\kappa_r^2}}.
\end{equation}
\emph{(Error bound)} Moreover, we have the following estimate
\begin{equation} \label{eq:th:rate:sup:estimateEB}
\frac{1}{2\kappa_r}\|x^k-x^\infty\|\leq \max_{i\in I} d(x^k,C_i)\leq c_r q_r^k.
\end{equation}
\end{corollary}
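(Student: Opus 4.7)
The plan is to derive this corollary as a direct specialization of Theorem \ref{th:rate} with the substitutions $U_i := P_{C_i}$ and $p_i := d(\cdot, C_i)$ for every $i \in I$. First I would note that $P_{C_i}$ is a cutter and that $\fix P_{C_i} = C_i = d(\cdot, C_i)^{-1}(0)$, which are standard facts about metric projections onto nonempty, closed and convex sets recalled in Section \ref{sec:preliminaries}. Hence the standing assumptions of Section \ref{sec:conv} are met, and the iteration (\ref{eq:th:conv:proj:xk}) fits the general template (\ref{eq:th:conv:xk}).

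Next I would verify the four hypotheses of Theorem \ref{th:rate}. Conditions (i) and (ii) are inherited verbatim from Corollary \ref{th:conv:proj}. Condition (iii) of Theorem \ref{th:rate}, namely $I_k \cap \Argmax_{j\in J_k} p_j(x^k) \neq \emptyset$, reduces under the substitution $p_j = d(\cdot, C_j)$ to the assumption (\ref{eq:th:rate:sub:IkJk}) of the corollary. The key simplification occurs in condition (iv): because
\begin{equation}
d(x,C_i) = \|P_{C_i}x - x\| = \|U_i x - x\|
\end{equation}
for every $x \in \mathcal H$, the required inequalities $\delta_r d(x,C_i) \leq p_i(x) \leq \Delta_r \|U_i x - x\|$ hold trivially on all of $\mathcal H$ with $\delta_r = \Delta_r = 1$. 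In particular, they hold on $B(P_Cx^0,r)$, so (iv) is satisfied for free.

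Having checked all four hypotheses, I would apply Theorem \ref{th:rate} to conclude that $\{x^k\}_{k=0}^\infty$ converges linearly to some $x^\infty \in C$. Substituting $\delta_r = \Delta_r = 1$ into the general estimate (\ref{eq:th:rate:estimate}) yields precisely (\ref{eq:th:rate:sup:estimate}), and substituting the same values into the error bound (\ref{eq:th:rate:estimateEB}) produces (\ref{eq:th:rate:sup:estimateEB}). There is no genuine obstacle in the argument itself: all of the work is contained in Theorem \ref{th:rate}, and the corollary is a clean specialization in which the proximity functions coincide identically with the displacements of the operators, collapsing the ratio $\delta_r/\Delta_r$ to $1$.
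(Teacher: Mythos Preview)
Your proposal is correct and matches the paper's own proof essentially verbatim: the paper also derives the corollary from Theorem \ref{th:rate} via the substitutions $U_i=P_{C_i}$ and $p_i(x)=d(x,C_i)$, observing that $\delta_r=\Delta_r=1$.
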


\begin{proof}
This result follows from Theorem \ref{th:rate} by substituting $U_i=P_{C_i}$ and $p_i(x)=d(x,C_i)$. Note that in this case $\delta_r= \Delta_r=1$ and $U_i$ is linearly regular, $i\in I$.
\end{proof}

\begin{example}[Example \ref{ex:FPA} revisited]\label{ex:PM}
We now present several projection methods (PM) which we obtain by substituting $U_i=P_{C_i}$ and $p_i=d(\cdot,C_i)$ in Example \ref{ex:FPA}. By assuming the existence of $\kappa_r>0$ we can apply Corollary \ref{th:rate:proj} to find the constant $q_r$.
\begin{enumerate}[(a)]
  \item Cyclic PM:
  $x^{k+1}:=P_{C_{[k]}}x^k$, where $s=m$ and
  \begin{equation}\label{eq:qr:cyclic:PM}
  q_r=\sqrt[2m]{1-\frac{1}{ m\kappa^2_r}}.
  \end{equation}
  A similar rate can be found, for example, in \cite[Theorem 1]{GurinPolyakRaik1967} by Gurin et al., in \cite[Theorem 5.7]{BauschkeBorwein1996} by Bauschke and Borwein, and in \cite[Theorem 4.5]{DeutschHundal2008} by Deutsch and Hundal.

  \item Simultaneous PM:
  $x^{k+1}:=\frac 1 b\sum_{i\in J_{[k]}} P_{C_i}x^k$, where
  \begin{equation}\label{eq:qr:simultaneous:PM}
  q_r=\sqrt[\frac{2m}{b}]{1-\frac{1}{m \kappa_r^2}}.
  \end{equation}
   A similar $q_r$ can be deduced from \cite[Theorem 2.2]{BeckTeboulle2003} by Beck and Teboulle with $b=m$. We recall that a linear convergence rate for $b=m$ appeared in \cite[Theorem 1.1]{Pierra1984} by Pierra.

  \item Simultaneous PM with active sets:
  $ x^{k+1}:=\frac 1 {|I_k|}\sum_{i\in I_k} P_{C_i}x^k,$ where $I_k:=\{i\in J_{[k]}\mid d(x^k,C_i)>0\}$. By similar arguments to those used in Example \ref{ex:FPA}, the upper bound for $q_r$ is the same as in (\ref{eq:qr:simultaneous:PM}).

  \item Remotest-set PM:
  $x^{k+1}:= P_{C_{i_k}}x^k$, where $i_k= \argmax_{j\in J_{[k]}}d(x^k,C_j)$ and
  \begin{equation}\label{eq:qr:maxprox:PM}
  q_r=\sqrt[\frac{2m}{b}]{1-\frac{b}{m \kappa_r^2}}.
  \end{equation}
  A similar estimate for $b=m$ can be found, for example, in \cite[Theorem 1]{GurinPolyakRaik1967} by Gurin et al. and in \cite[Theorem 5.8]{BauschkeBorwein1996} by Bauschke and Borwein. It can also be deduced from \cite[Theorem 2.2]{BeckTeboulle2003} by Beck and Teboulle. We note here that originally, for $b\leq m$, this PM appeared in \cite[Section 5.8.4]{Cegielski2012} by Cegielski.

  \item Simultaneous PM with the inner block of a fixed size:
  $ x^{k+1}:=\frac 1 t \sum_{i\in I_k} P_{C_i}x^k,$ where $I_k:=\{t$ smallest indices from $J_{[k]}$ with the largest distance $d(x^k,C_j) \}$. We have
  \begin{equation}\label{eq:qr:simultaneous:t:PM}
  q_r=\sqrt[\frac{2m}{b}]{1-\frac{b}{mt\kappa_r^2}}.
  \end{equation}

  \item Simultaneous PM with the inner block determined by the threshold:
  $x^{k+1}:=\frac 1 {|I_k|}\sum_{i\in I_k} P_{C_i}x^k,$ where $I_k:=\{i\in J_{[k]}\mid d(x^k,C_i)\geq t \max_{j\in J_{[k]}} d(x^k,C_j)\}$ and $t\in [0,1]$. Since we do not control the number of constraints for which the distance functional is above the threshold, we can only estimate the upper bound for $q_r$ by using (\ref{eq:qr:simultaneous:t:PM}) and setting there $t=b$. This leads us to formula (\ref{eq:qr:simultaneous:PM}). We recall here that originally, for $b\leq m$, this PM appeared in \cite[Section 5.8.4]{Cegielski2012} by Cegielski.
\end{enumerate}

\end{example}

\begin{remark}\label{rem:proxForLinSys}
Notice that for the projection methods designed for solving linear systems ($Ax=b$) and systems of linear inequalities ($Ax\leq b$), we can define the proximities
$p_i(x)=|\langle a_i,x\rangle-b_i|$ and $p_i(x)=(\langle a_i,x\rangle-b_i)_+$, respectively. By setting $U_i=P_{C_i}$  we see that $\delta_r=\min_i \|a_i\|$ and $\Delta_r=\max_i\|a_i\|$.
\end{remark}

\subsection{Subgradient projection algorithms}\label{sec:conv:sub}
Unless otherwise stated, from now on we assume that $C:=\bigcap_{i\in I}C_i \neq\emptyset$ and for every $i\in I$, we have
\begin{equation}
C_i=S(f_i,0)=\{z\in\mathcal H\mid f_i(z)\leq 0\}\subseteq\mathcal H,
\end{equation}
where $f_i\colon\mathcal H\rightarrow\mathbb R$ is convex and continuous.

\begin{corollary}[Weak convergence in $\mathcal H$]\label{th:conv:sub:H}
Let the sequence $\{x^k\}_{k=0}^\infty$ be defined by the double-layer subgradient projection algorithm, that is,
\begin{equation}\label{eq:th:conv:sub:H:xk}
x^0\in\mathcal H;\quad x^{k+1}:=x^k+\alpha_k \left (\sum_{i\in I_k}  \ac{\omega_i^k}%ac
 P_{f_i}x^k - x^k \right) \text{ for } k=0,1,2,\ldots,
\end{equation}
where $I_k\subseteq J_k\subseteq I$, $\alpha_k\in (0,2)$ and $\omega_i^k\in  (0,1]$ are such that $\sum_{i\in I_k}\omega_i^k=1$.
Assume that
\begin{enumerate}[(i)]
\item $\alpha_k\in[\alpha^-,\alpha^+]$ and $\omega^i_k\in[\omega^-, 1]$, where $\alpha^-, \omega^- \in (0,1]$ and $\alpha^+\in [1,2)$;

\item \ac{$I= J_k\cup\ldots\cup J_{k+s-1}$ }%ac
for every $k=0,1,2,\ldots$ and some integer \ac{$s\geq 1$;}%ac

\item the inner and outer controls satisfy
\begin{equation}\label{eq:th:conv:sub:H:IkJk}
\lim_{k\rightarrow\infty}\max_{i\in I_k}\|P_{f_i}x^k-x^k\|=0 \quad\Longrightarrow\quad \lim_{k\rightarrow\infty}\max_{j\in J_k}\|P_{f_j}x^k-x^k\|=0;
\end{equation}
\item for each $i\in I$, the function $f_i$ is Lipschitz continuous on bounded sets.
\end{enumerate}
Then, by (iv), $P_{f_i}$ is weakly regular, $i\in I$. Consequently, the sequence $\{x^k\}_{k=0}^\infty$ converges weakly to some point $x^\infty \in C$.
\end{corollary}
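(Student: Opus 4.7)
The plan is to apply Theorem~\ref{th:conv} with the identifications $U_i := P_{f_i}$ and $p_i(x) := \|P_{f_i}x - x\|$ (that is, take the displacement as the proximity function). Inequality~(\ref{eq:subProj:cutter}) shows that each $P_{f_i}$ is a cutter, and by construction $C_i = \fix P_{f_i} = p_i^{-1}(0)$, so the standing setup preceding Theorem~\ref{th:conv} is in force. The remaining work is to verify that theorem's four numbered hypotheses and the weak regularity of the operators.

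Hypotheses (i) and (ii) of Theorem~\ref{th:conv} are transcribed verbatim from (i) and (ii) of the corollary. Under the displacement choice of $p_i$, the control-compatibility condition (\ref{eq:th:conv:pipj}) of Theorem~\ref{th:conv} reduces to (\ref{eq:th:conv:sub:H:IkJk}), which is hypothesis (iii) of the corollary. Meanwhile, the biconditional (\ref{eq:th:conv:piUi}) degenerates into the tautology $\lim_k \|P_{f_i}y^k - y^k\| = 0 \Longleftrightarrow \lim_k \|P_{f_i}y^k - y^k\| = 0$ and is automatic for every sequence, so no boundedness argument is needed. Thus all four hypotheses of Theorem~\ref{th:conv} are in place provided that the $P_{f_i}$ are weakly regular.

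The one genuine step is to extract weak regularity of $P_{f_i}$ from hypothesis (iv). I would invoke Example~\ref{ex:subProj}(a): since each $f_i$ is Lipschitz continuous on bounded sets, $P_{f_i} - \id$ is demi-closed at $0$, which by the discussion following Definition~\ref{def:BR:oper} is precisely weak regularity over $\mathcal H$. With this in hand, Theorem~\ref{th:conv} delivers the weak convergence $x^k \rightharpoonup x^\infty \in C$.

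I do not anticipate any real obstacle. The corollary is a clean specialization in which using the displacement as proximity makes both the $p_i$--$U_i$ biconditional and the inner/outer control transfer automatic, so the only substantive ingredient is the demi-closedness principle for subgradient projections already recorded in Example~\ref{ex:subProj}(a).
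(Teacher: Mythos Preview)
Your proposal is correct and matches the paper's own proof essentially verbatim: the paper simply says that weak regularity of $P_{f_i}$ follows from (iv) via Example~\ref{ex:subProj}(a), and that the rest follows from Theorem~\ref{th:conv} by substituting $U_i=P_{f_i}$ and $p_i(x)=\|P_{f_i}x-x\|$. Your additional remarks about (\ref{eq:th:conv:piUi}) becoming a tautology under the displacement choice are accurate elaborations of exactly this substitution.
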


\begin{proof}
The weak regularity of $P_{f_i}$ follows from (iv); compare with Example \ref{ex:subProj} (a). The rest follows from Theorem \ref{th:conv}, where it suffices to substitute $U_i=P_{f_i}$ and $p_i(x)=\|P_{f_i}x-x\|$.
\end{proof}

\begin{remark}
We emphasize that in view of Example \ref{ex:subProj} (a), one can replace condition (iv) either by: \textit{$f_i$ maps bounded sets onto bounded sets, $i\in I$}; or by: \textit{$\partial f_i$ is uniformly bounded on bounded sets, $i\in I$}.
\end{remark}

\begin{corollary}[Convergence in $\mathbb R^n$]\label{th:conv:sub:Rn}
Let the sequence $\{x^k\}_{k=0}^\infty$ be defined as in Corollary \ref{th:conv:sub:H} under conditions (i) and (ii). Moreover, assume that
\begin{enumerate}[(i)]
\setcounter{enumi}{2}
\item the inner control and outer control satisfy
\begin{equation}\label{eq:th:conv:sub:Rn:IkJk}
\lim_{k\rightarrow\infty}\max_{i\in I_k}f^+_i(x^k)=0 \quad\Longrightarrow\quad \lim_{k\rightarrow\infty}\max_{j\in J_k}f^+_j(x^k)=0;
\end{equation}
\item $\mathcal H=\mathbb R^n$.
\end{enumerate}
Then,  by (iv), for every bounded sequence $\{y^k\}_{k=0}^\infty\subseteq\mathbb R^n$ and $i\in I$,
\begin{equation}
\lim_{k\rightarrow\infty}f^+_i(y^k)= 0 \quad\Longleftrightarrow\quad \lim_{k\rightarrow\infty}\|P_{f_i}y^k-y^k\|= 0 \quad\Longleftrightarrow\quad
\lim_{k\rightarrow\infty}d(y^k,S(f_i,0))= 0.
\label{eq:th:conv:sub:Rn:fiPidi}
\end{equation}
In particular, this holds for $\{x^k\}_{k=0}^\infty$ defined in (\ref{eq:th:conv:sub:H:xk}). Consequently, the sequence $\{x^k\}_{k=0}^\infty$ converges to some point $x^\infty \in C$.
\end{corollary}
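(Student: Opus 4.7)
The plan is to reduce to Corollary \ref{th:conv:sub:H} by first establishing the three-way equivalence (\ref{eq:th:conv:sub:Rn:fiPidi}) for arbitrary bounded sequences in $\mathbb{R}^n$, and then using this equivalence to translate the proximity-based hypothesis (iii) of the present corollary into the displacement-based hypothesis (iii) of Corollary \ref{th:conv:sub:H}. Condition (iv) of Corollary \ref{th:conv:sub:H} is automatic in $\mathbb{R}^n$ (Example \ref{ex:subProj}(a)), and since $\mathcal{H}=\mathbb{R}^n$ the weak convergence provided by Corollary \ref{th:conv:sub:H} becomes norm convergence.

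Fix a bounded sequence $\{y^k\}\subseteq\mathbb{R}^n$ and $i\in I$. First, suppose $f_i^+(y^k)\to 0$; if $d(y^{k_j},S(f_i,0))\geq\varepsilon$ along some subsequence, then by boundedness we may further extract $y^{k_j}\to y$, and continuity of $f_i^+$ gives $f_i^+(y)=0$, i.e., $y\in S(f_i,0)$, so $d(y^{k_j},S(f_i,0))\to 0$, a contradiction. Next, since $P_{f_i}$ is a cutter with $\fix P_{f_i}=S(f_i,0)$, Remark \ref{rem:SQNEandMP} yields $\|P_{f_i}y^k-y^k\|\leq d(y^k,S(f_i,0))$. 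Finally, from (\ref{eq:def:subProj:short}) we have $f_i^+(y^k)=\|g_{f_i}(y^k)\|\cdot\|P_{f_i}y^k-y^k\|$, and since $\{y^k\}$ is bounded and, in $\mathbb{R}^n$, $\partial f_i$ is bounded on bounded sets (Example \ref{ex:subProj}(a)), the first factor is bounded, so $f_i^+(y^k)\to 0$ whenever $\|P_{f_i}y^k-y^k\|\to 0$. This closes the three-way loop.

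As in the proof of Theorem \ref{th:conv}, the algorithm produces a Fej\'er-monotone sequence $\{x^k\}$ with respect to $C$, hence bounded, so (\ref{eq:th:conv:sub:Rn:fiPidi}) applies to $\{x^k\}$. To verify hypothesis (iii) of Corollary \ref{th:conv:sub:H}, assume $\max_{i\in I_k}\|P_{f_i}x^k-x^k\|\to 0$. The uniform bound $f_i^+(x^k)\leq M\|P_{f_i}x^k-x^k\|$ with $M:=\sup_{k,i}\|g_{f_i}(x^k)\|<\infty$ gives $\max_{i\in I_k}f_i^+(x^k)\to 0$; by the current hypothesis (iii), $\max_{j\in J_k}f_j^+(x^k)\to 0$. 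If $\max_{j\in J_k}\|P_{f_j}x^k-x^k\|\not\to 0$, extract a subsequence with $j_k\in J_k$ and $\|P_{f_{j_k}}x^k-x^k\|\geq\varepsilon$; finiteness of $I$ and boundedness of $\{x^k\}$ let us further reduce to $j_k\equiv j^*$ and $x^k\to y$, whence $f_{j^*}^+(y)=0$ and $\|P_{f_{j^*}}x^k-x^k\|\leq d(x^k,S(f_{j^*},0))\leq\|x^k-y\|\to 0$, a contradiction. The main technical point is precisely this last step: the factor $\|g_{f_{j^*}}(x^k)\|$ in the denominator of $\|P_{f_{j^*}}x^k-x^k\|=f_{j^*}^+(x^k)/\|g_{f_{j^*}}(x^k)\|$ may a priori vanish, so a direct quantitative estimate fails and we must exploit compactness of bounded sets in $\mathbb{R}^n$ together with the cutter inequality. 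Corollary \ref{th:conv:sub:H} then applies and yields norm convergence to some $x^\infty\in C$.
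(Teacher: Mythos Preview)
Your proof is correct, and the three-way equivalence (\ref{eq:th:conv:sub:Rn:fiPidi}) is established in essentially the same way as in the paper (bounded subgradients give $f_i^+\leq\Delta\|P_{f_i}-\id\|$, the cutter inequality gives $\|P_{f_i}-\id\|\leq d(\cdot,S(f_i,0))$, and a compactness argument closes the loop). The route you take afterwards, however, differs from the paper's.

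You reduce to Corollary~\ref{th:conv:sub:H}, which internally uses the proximity $p_i(x)=\|P_{f_i}x-x\|$. Because of this you must translate the present hypothesis (iii), stated in terms of $f_i^+$, into the displacement form (\ref{eq:th:conv:sub:H:IkJk}); this is your extra contradiction/compactness argument involving $j_k\equiv j^*$ and $x^k\to y$. The paper instead applies Theorem~\ref{th:conv} \emph{directly} with $U_i=P_{f_i}$ and $p_i=f_i^+$. With that choice, condition (iii) of Theorem~\ref{th:conv} is literally the current hypothesis (iii), and the only thing left to verify is condition (iv) of Theorem~\ref{th:conv}, namely the equivalence $f_i^+(y^k)\to 0\Leftrightarrow\|P_{f_i}y^k-y^k\|\to 0$, which is part of (\ref{eq:th:conv:sub:Rn:fiPidi}). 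Your additional translation step is therefore avoidable: by going back to the general Theorem~\ref{th:conv} rather than its specialization Corollary~\ref{th:conv:sub:H}, the proximity $p_i=f_i^+$ can be plugged in as-is and no reinterpretation of condition (iii) is needed. Both routes arrive at the same conclusion (weak, hence strong, convergence in $\mathbb R^n$), but the paper's is shorter by one nontrivial argument.
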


\begin{proof}
To apply Theorem \ref{th:conv} with $U_i=P_{f_i}$ and $p_i=f_i^+$, it suffices to show that condition (\ref{eq:th:conv:sub:Rn:fiPidi}) holds true.
Indeed, let $\{y^k\}_{k=0}^\infty\subseteq S$, where $S\subseteq\mathbb R^n$ is bounded. Since $\mathcal H=\mathbb R^n$, for each $i\in I$, the subdifferential $\partial f_i$ is uniformly bounded on bounded sets; see Example \ref{ex:subProj} (a). In particular, there is $\Delta_S$ such that for every $i\in I$ and $x\in S$, $\|g_{f_i}(x)\|\leq \Delta_S$. Consequently, for every $k=0,1,2,\ldots,$ we have
\begin{equation}
\|g_{f_i}(y^k)\|\leq \Delta_S.
\end{equation}
Therefore, by the definition of the subgradient projection, see (\ref{eq:def:subProj:short}),
\begin{equation}
f^+_i(y^k)\leq \Delta_S\|P_{f_i}y^k-y^k\|\leq \Delta_S d(y^k, S(f_i,0)),
\end{equation}
where the last inequality follows from the fact that $P_{f_i}$ is a cutter; see (\ref{eq:subProj:cutter}) and Remark \ref{rem:SQNEandMP}.

To complete the proof it suffices to show that $f^+_i(y^k)\rightarrow 0$ implies that $d(y^k,C_i)\rightarrow 0$. Indeed, following \cite[Lemma 24]{CegielskiZalas2013}, we assume to the contrary that $f^+_i(y^k)\rightarrow 0$ and $\limsup_k d(y^k,S(f_i,0))= \varepsilon>0$. Since $\{y^k\}_{k=0}^\infty$ is bounded, by eventually passing to a subsequence, without loss of generality we may assume that $y^k\rightarrow y$ and $\lim_k d(y^k,S(f_i,0))= \varepsilon$. By continuity of $d(\cdot,S(f_i,0))$, we have $d(y,S(f_i,0))=\varepsilon$. The lower semicontinuity of $f^+_i$ implies that $0=\lim_kf^+_i(y^k)=\liminf_k f^+_i(y^k)\geq f^+_i(y)$, which is in contradiction with $\varepsilon >0$.
\end{proof}

\begin{corollary}[Linear convergence in $\mathbb R^n$]\label{th:rate:sub:Rn}
Let the sequence $\{x^k\}_{k=0}^\infty$ be defined as in Corollary \ref{th:conv:sub:H} under conditions (i) and (ii). Moreover, assume that

\begin{enumerate}[(i)]
\setcounter{enumi}{2}
\item the inner and outer controls satisfy, for each $k=0,1,2,\ldots$,
\begin{equation}\label{eq:th:rate:sub:Rn:IkJk}
I_k\cap \Argmax_{j\in J_k} f^+_j(x^k) \neq \emptyset;
\end{equation}

\item $\mathcal H=\mathbb R^n$ and for each $i\in I$, \ac{there is a point $z^i$ such that $f_i( z^i)<0$.}%ac
\end{enumerate}
Then, by (iv), for $r=d(x^0,C)>0$, there are numbers $\delta_r, \Delta_r>0$ such that for every $x\in B(P_Cx^0,r)$ and $i\in I$,
\begin{equation}\label{eq:th:rate:sup:Rn:difiUi}
\delta_r d(x,S(f_i,0))\leq f^+_i(x) \leq \Delta_r \|P_{f_i}x-x\|.
\end{equation}
In addition, if the family $\mathcal C:=\{S(f_i,0)\mid i\in I\}$ is $\kappa_r$-boundedly linearly regular over $B(P_Cx^0,r)$, in particular, \ac{when $\max_{i\in I}f_i(z)<0$ for some $z$ (Slater condition), }%ac
then the sequence $\{x^k\}_{k=0}^\infty$ converges linearly to some point $x^\infty\in C$, that is, $\|x^k-x^\infty\|\leq c_rq_r^k$, where
\begin{equation}\label{eq:th:rate:sup:Rn:estimate}
c_r=\frac{2d(x^0,C)}
{ q_r^{s-1}}
\qquad\text{and}\qquad
q_r=\sqrt[\scriptstyle{2s}]{1-\frac{\omega^-(2-\alpha^+)(\alpha^-)^2}{s\alpha^+}\cdot
\left(\frac {\delta_r }{\Delta_r \kappa_r}\right)^2}.
\end{equation}
\emph{(Error bound)} Moreover, we have the following estimate:
\begin{equation} \label{eq:th:rate:Rn:estimateEB}
\frac{\delta_r}{2\kappa_r}\|x^k-x^\infty\|\leq \max_{i\in I} f^+_i(x^k)\leq \Delta_r c_r q_r^k.
\end{equation}
\end{corollary}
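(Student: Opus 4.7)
The plan is to derive Corollary \ref{th:rate:sub:Rn} from Theorem \ref{th:rate} by substituting $U_i=P_{f_i}$ and $p_i=f_i^+$, so most of the work goes into verifying the two key structural hypotheses (iii) and (iv) of Theorem \ref{th:rate}. Hypothesis (iii) is exactly assumption (iii) of the corollary, so the real content is establishing the two-sided proximity bound (\ref{eq:th:rate:sup:Rn:difiUi}) on the closed ball $\overline{B(P_Cx^0,r)}$, together with bounded linear regularity in the Slater case.

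For the upper bound $f_i^+(x)\leq \Delta_r\|P_{f_i}x-x\|$, I would use formula (\ref{eq:def:subProj:short}), which gives $f_i^+(x)=\|g_{f_i}(x)\|\cdot\|P_{f_i}x-x\|$ whenever $f_i(x)>0$ (and is trivial otherwise). Since $\mathcal H=\mathbb R^n$, Example \ref{ex:subProj}(a) says that $\partial f_i$ is uniformly bounded on the bounded set $\overline{B(P_Cx^0,r)}$; pick $\Delta_r^i$ to be this bound and set $\Delta_r:=\max_i \Delta_r^i$. For the lower bound $\delta_r d(x,S(f_i,0))\leq f_i^+(x)$, I would apply Fukushima's inequality (Lemma \ref{th:Fukushima}) separately to each $f_i$ on the compact set $K=\overline{B(P_Cx^0,r)}$, using the individual Slater point $z^i$ with $f_i(z^i)<0$. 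This yields constants $\delta_r^i>0$; then set $\delta_r:=\min_i \delta_r^i$.

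Next I would establish the bounded linear regularity of $\mathcal C$ in the promised special case (full Slater condition $\max_{i\in I}f_i(z)<0$). This could be done in two equivalent ways: either note that the Slater point $z$ lies in $\interior C_j\cap\interior\bigcap_{i\neq j}C_i$ for every $j$, so Theorem \ref{eq:thm:BLR:prop}(ii) applies; or apply Lemma \ref{th:Fukushima} to $g:=\max_{i\in I}f_i$, giving $d(x,C)\leq \delta_K^{-1}g^+(x)=\delta_K^{-1}\max_i f_i^+(x)$, and then bound $f_i^+(x)\leq L_i\, d(x,C_i)$ using the fact that each $f_i$ is Lipschitz on bounded subsets of $\mathbb R^n$, thereby producing an explicit $\kappa_r$.

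Once (\ref{eq:th:rate:sup:Rn:difiUi}) and the bounded linear regularity of $\mathcal C$ are in hand, conditions (i)--(iv) of Theorem \ref{th:rate} are satisfied, and the linear convergence estimate (\ref{eq:th:rate:sup:Rn:estimate}) together with the error bound (\ref{eq:th:rate:Rn:estimateEB}) are immediate from (\ref{eq:th:rate:estimate}) and (\ref{eq:th:rate:estimateEB}) after the substitutions $U_i=P_{f_i}$, $p_i=f_i^+$. The main obstacle is simply bookkeeping: choosing a single common ball on which Fukushima's lemma, the uniform subgradient bound, and the Lipschitz constants are all valid, and verifying that the constants $\delta_r,\Delta_r,\kappa_r$ can be extracted uniformly in $i\in I$. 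No issue of weak-to-strong passage arises, since we are already in $\mathbb R^n$ and all subgradient projections $P_{f_i}$ are boundedly (in fact linearly) regular by Example \ref{ex:subProj}(c).
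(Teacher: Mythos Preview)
Your proposal is correct and follows essentially the same approach as the paper: reduce to Theorem \ref{th:rate} with $U_i=P_{f_i}$, $p_i=f_i^+$, obtain the upper bound in (\ref{eq:th:rate:sup:Rn:difiUi}) from the uniform subgradient bound on the ball (Example \ref{ex:subProj}(a)) together with (\ref{eq:def:subProj:short}), and obtain the lower bound by applying Lemma \ref{th:Fukushima} to each $f_i$ separately on the compact ball. Your additional justification that the full Slater condition yields bounded linear regularity (via Theorem \ref{eq:thm:BLR:prop}(ii) or directly via Lemma \ref{th:Fukushima} applied to $g=\max_i f_i$) makes explicit a step the paper leaves to the reader.
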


\begin{proof}
It suffices to show that condition (\ref{eq:th:rate:sup:Rn:difiUi}) holds which, by Theorem \ref{th:rate} with $U_i=P_{f_i}$ and $p_i=f_i^+$, yields the result.
Indeed, since $\mathcal H=\mathbb R^n$, for each $i\in I$, the subdifferential $\partial f_i$ is uniformly bounded on bounded sets; see Example \ref{ex:subProj} (a). This holds, in particular, for the ball $B(P_Cx^0, r=d(x^0,C))$. Therefore there is a number $\Delta_r$ such that for every $i\in I$ and $x\in B(P_Cx^0, r)$, we have $\|g_{f_i}(x)\|\leq \Delta_r$. Thus, by the definition of the subgradient projection, see (\ref{eq:def:subProj:short}), we get
\begin{equation}
f^+_i(x) \leq \Delta_r \|P_{f_i}x-x\|.
\end{equation}

Now, assume that for each $i\in I$, there is $ z^i\in S(f_i,0)$ such that $f_i(z^i)<0$ (Condition (iv)). Then, by Lemma \ref{th:Fukushima} applied to eaxch $f_i$ separately, for every compact set $K$ there is $\delta_K>0$ such that for every $x\in K$, $\delta_Kd(x,S(f_i,0))\leq f_i^+(x)$. This holds, in particular, for $K=B(P_Cx^0,r)$ with a common constant $\delta_r$, which completes the proof.
\end{proof}

\begin{example}[Examples \ref{ex:FPA} and \ref{ex:PM} continued] \label{ex:SPM}
We now present several subgradient projection methods (SPM), which we obtain by substituting $U_i=P_{f_i}$ and $p_i=f_i^+$ in Example \ref{ex:FPA}. By assuming the existence of $\kappa_r>0$ and condition (iv) from Corollary \ref{th:rate:proj}, we can obtain the constant $q_r$.
\begin{enumerate}[(a)]
  \item Cyclic SPM:
  $x^{k+1}:=P_{f_{[k]}}x^k$ with $q_r$ defined in (\ref{eq:qr:cyclic}). A similar rate can be found, for example, in \cite[Theorem 2]{DePierroIusem1988} by De Pierro and Iusem.

  \item Simultaneous SPM:
  $x^{k+1}:=\frac 1 b\sum_{i\in J_{[k]}} P_{f_i}x^k$ with $q_r$ defined in (\ref{eq:qr:simultaneous}).

  \item Simultaneous SPM with active sets:
  $ x^{k+1}:=\frac 1 {|I_k|}\sum_{i\in I_k} P_{f_i}x^k,$ where $I_k:=\{i\in J_{[k]}\mid f_i^+(x^k)>0\}$ where $q_r$ is defined in (\ref{eq:qr:simultaneous}).

  \item SPM with the most-violated constraint, that is:
  $x^{k+1}:= P_{f_{i_k}}x^k$, where $i_k= \argmax_{j\in J_{[k]}} f_j^+(x^k)$ and $q_r$ is defined by formula (\ref{eq:qr:maxprox}). We recall that for $b=m$, the linear convergence rate can be found, for example, in  \cite[Theorem, p. 142]{Eremin1968}  by Eremin and in \cite[Theorem 6]{Polyak1969} by  Polyak.

  \item Simultaneous SPM with the inner block of a fixed size:
  $ x^{k+1}:=\frac 1 t \sum_{i\in I_k} P_{f_i}x^k,$ where $I_k:=\{t$ smallest indices from $J_{[k]}$ with the largest violation $f_j^+(x^k) \}$, where $q_r$ is defined in (\ref{eq:qr:simultaneous:t}).

  \item Simultaneous SPM with the inner block determined by the threshold:
  $x^{k+1}:=\frac 1 {|I_k|}\sum_{i\in I_k} P_{f_i}x^k,$ where $I_k:=\{i\in J_{[k]}\mid f_i^+(x^k)\geq t \max_{j\in J_{[k]}} f_j^+(x^k)\}$ and $t\in [0,1]$. Here $q_r$ is defined in (\ref{eq:qr:simultaneous}).

\end{enumerate}

\end{example}

\ac{
\subsection{Lopping and flagging}\label{sec:conv:loppFlagg}
Note that the simple choice of the outer control $\{J_k\}_{k=0}^\infty$ presented in Example 18 can easily be extended by applying the so-called \textit{lopping} and \textit{flagging} methodology \cite{Haltmeier2009, ElfvingHansenNikazad2016}. Both of these techniques were brought to our attention by an anonymous referee. This, in particular, can be considered a practical realization of the $s$-intermittent control; see (3). We recall that the idea of lopping presented in \cite{Haltmeier2009, ElfvingHansenNikazad2016} is to omit computation of algorithmic operators assigned to a block if that block has no active constraints and proceed to another block. One checks if the block has active constraints by comparing a certain violation measure with a user chosen threshold. By extending this idea one can flag the nonactive block as not available for the upcoming $N$ iterations and unflag it afterwards, so that it becomes available again. To this end, assume that $I=J_1'\cup\ldots \cup J_s'$ for some $s\geq 1$, $J_k'=J_{[k]}'$ for all $k\geq s+1$, where $[k]=(k\text{ mod } s)+1$. Moreover, let $N\geq 1$ and $\varepsilon \geq 0$ be user chosen parameters. We will use $\varepsilon$ for flagging blocks as well as for the stopping rule. Following \cite{ElfvingHansenNikazad2016}, we propose the following algorithm:

\begin{algorithm2}[Double layer control with lopping and flagging]  \rm~
\begin{itemize}
    \item[] \textbf{Initialize.} Set $n=k:=0$, $J_0=J_0'=:J_1'$ and flag all the blocks $\{J_k'\}_{k=0}^\infty$ as available. Moreover, choose $x^0\in\mathcal H$ and mark $J_0'$ as the last used block.
    \item[] \textbf{Step 1.} For a given $k\geq 0$ and $x^k$, let $J_l'$ be the last used block for some $l\geq k$. Take the next available block after $J_l'$ and denote it by $J_t'$. Set $J_k:=J_t'$.
    \item[] \textbf{Step 2.} If $\max_{j\in J_k}p_j(x^k)>\varepsilon$, then set $n:=0$, select a proper $I_k$ from $J_k$ and compute $x^{k+1}$ using formula (29). If $\max_{j\in J_k}p_j(x^k)\leq\varepsilon$, then set $n:=n+1$, $x^{k+1}:=x^k$ and flag blocks $J_{t}', J_{t+s}',\ldots,J_{t+Ns}'$ as not available.
    \item[] \textbf{Step 3.} If $n<s$ then set $k:=k+1$ and return to step 1. If $n=s$ then stop the algorithm since $\max_{i\in I }p_i(x^k)\leq \varepsilon$.
\end{itemize}
\end{algorithm2}

It is easy to see that every basic block $J_l'$, $l=1,\ldots,s$, has to appear at least once in $\{J_k,\ldots, J_{k+Ns-1}\}$ and therefore $\{J_k\}$ is $(Ns)$-intermittent. Therefore, when $\varepsilon=0$, the sequence $\{x^k\}_{k=0}^\infty$ defined above may converge either weakly, strongly or even linearly by Theorems 16 and 17. When $\varepsilon>0$, then one can show that in a finite number of iterations we can reach the stopping rule $\max_{i\in I}p_i(x^k)<\varepsilon$ under the assumption that the inner and outer controls satisfy (46), and that $p_i$ and $U_i$ are related as in (31). The argument is a slight modification of the proof of Theorem 16. We sketch it below for the convenience of the reader.

Indeed, let $K:=\{k\mid \max_{j\in J_k}p_i(x^k)>\varepsilon\}$. Note that $x^{k+1}\neq x^k$ may happen only when $k\in K$. If $K$ is finite then the algorithm stops. Assume that $K$ is infinite. For every $k\in K$ and $z\in C$, we have
\begin{equation}
\|x^{k+1}-z\|^2=\|T_kx^k-z\|^2\leq \|x^k-z\|^2 - \rho\|T_kx^k-x^k\|,
\end{equation}
where $T_k$ and $\rho$ are defined as in (32)--(33). Therefore, for every $k=0,1,2,\ldots$, we have $\|x^{k+1}-z\|\leq \|x^k-z\|$. This implies that the sequence $\{x^k\}_{k=0}^\infty$ is Fej\'{e}r monotone with respect to $C$, $\{\|x^k-z\|\}_{k=0}^\infty$ converges and consequently,
\begin{equation}
\lim_{k\rightarrow\infty \atop k\in K}\|T_kx^k-x^k\|=0.
\end{equation}
In view of (38), which holds for every $k\in K$ by (31) and (46), we get
\begin{equation}
\lim_{k\rightarrow\infty \atop k\in K} \max_{j\in J_k}p_j(x^k)=0.
\end{equation}
This implies that that there is $k_0$ such that for every $k\geq k_0$, $\max_{j\in J_k}p_j(x^k)\leq \varepsilon$. Note that for $k\notin K$ this follows from the definition of the sequence $\{x^k\}_{k=0}^\infty$. Consequently, $K$ needs to be finite.
}%ac

\section{Numerical experiments}\label{sec:num}

In this section we test the performance of selected algorithms on systems of linear inequalities $Ax\leq b$, where the dimensions of the matrix $A$ are $100 \times 20$. The linear inequalities are randomly generated so that they form a consistent convex feasibility problem.

Since for every $i\in I$, the $i$-th inequality corresponds to a half-space $C_i=\{z\in\mathbb R^{20}\mid\langle a_i,z\rangle \leq b_i\}$, following Remark \ref{rem:proxForLinSys}, we define
\begin{equation}
p_i(x):=(\langle a_i,x\rangle-b_i)_+; \qquad U_ix:=P_{C_i}x=x-\frac{p_i(x)}{\|a_i\|^2}a_i.
\end{equation}
To design our algorithms, we repeat the setting of Example \ref{ex:FPA}. First, we divide $I=\{1,\ldots,100\}$ into $s$ equal blocks of size $b$, that is, $I=J_1\cup\ldots\cup J_s$. For the last block we allow $|J_s|\leq b$. Second, we apply a cyclic outer control $\{J_{[k]}\}_{k=0}^\infty$, where $[k]:=(k \text{ mod } s)+1$. Moreover, we set $\alpha_k=1$ and $\omega_i^k=1/|I_k|$ for every $k=0,1,2,\ldots$.

The algorithms in which we are interested are the following projection methods:
\begin{itemize}
  \item Cyclic PM:
  $x^{k+1}:=P_{C_{[k]}}x^k$, where in this case $s=m$ and thus $[k]:=(k \text{ mod } m)+1$;

  \item Simultaneous PM:
  $x^{k+1}:=\frac 1 {| J_{[k]}|}\sum_{i\in J_{[k]}} P_{C_i}x^k$;

  \item Maximum proximity PM:
  $x^{k+1}:= P_{C_{i_k}}x^k$, where $i_k= \argmax_{j\in J_{[k]}}p_j(x^k)$;

  \item Simultaneous PM with the inner block of a fixed size:
  $ x^{k+1}:=\frac 1 t \sum_{i\in I_k} P_{C_i}x^k,$ where $I_k:=\{t$ smallest indices from $J_{[k]}$ with the largest proximity $p_j(x^k) \}$.

  \item Simultaneous PM with the inner block determined by a threshold:
  $x^{k+1}:=\frac 1 {|I_k|}\sum_{i\in I_k} P_{C_i}x^k,$ where $I_k:=\{i\in J_{[k]}\mid p_i(x^k)\geq t \max_{j\in J_{[k]}} p_j(x^k)\}$ and $t\in [0,1]$.
\end{itemize}
Note that in view of Remark \ref{rem:proxForLinSys}, for each one of the above-mentioned PM's we can apply the formulae of Example \ref{ex:FPA} with $\delta_r=\min_i \|a_i\|$ and $\Delta_r=\max_i\|a_i\|$.

For every algorithm we perform $N=100$ simulations, while sharing the same set of test problems. We apply two types of stopping rules: the first one is $\max_{i\in I} p_i(x^k)\leq 10^{-6}$, which we verify per every 100 iterations (first matrix dimension); the second one is an upper bound of 5000 iterations. After running all of the simulations, for every iterate we compute the maximum proximity. In order to compare our algorithms, we consider the quantity
\begin{equation}\label{eq:plotMaxProx}
\log_{10}\left(\frac{\max_{i\in I} p_i(x^k)}{\max_{i\in I} p_i(x^0)}\right).
\end{equation}
The bold line in Figures \ref{fig:1}-\ref{fig:4} indicates the median computed for (\ref{eq:plotMaxProx}). The ribbon plot represents the concentration of order 10, 20, 30, 40 and 50\% around the median.

We present now several observations which we have noticed after running the numerical simulations. These observations confirmed some of the predictions which followed Example \ref{ex:FPA}.
\begin{enumerate}[(a)]
  \item The maximum proximity PM with the largest possible outer block $J_{[k]}=I$   outperformed every other PM we considered, while simultaneous PM seems to be the slowest one; see Figures \ref{fig:1}--\ref{fig:3}.

  \item By reducing the block size $b$ in the maximum proximity PM, we approach the convergence rate of the cyclic PM. By increasing the size of the block, we speed up the convergence \ac{in terms of iterations. }%ac
      Even for the small blocks, where $b=2,3,5$, the improvement is significant in comparison with the cyclic method. Moreover, in the case of $b=25$, the performance is almost the same as in the case of $b=100$; see Figure \ref{fig:1}. Therefore, in general, one could expect that the maximum proximity algorithms may have the same convergence properties with some $b<m$ as they have in the case of $b=m$.

  \item In Figure \ref{fig:2} we regulate the size of the inner block by adjusting the parameter $t$. Following (b), we have fixed the size of the outer block to be 25 ($b=25$). By reducing the parameter $t$ we approach the maximum proximity PM, while by increasing $t$ we approach the simultaneous PM. Surprisingly, by taking into account more information from the outer block we reduce, in fact, the convergence speed.

  \item In Figure \ref{fig:3} we control the size of the inner block by modifying the threshold determined by $t$. As above, we have fixed the size of the outer block to be 25. Similarly, by increasing the threshold we approach the maximum proximity PM and thus we speed up the convergence. On the other hand, by decreasing $t$ we approach the simultaneous PM which consequently slows down the convergence. As in case (c), by considering more information from the outer block once again, we reduce the convergence speed.

  \item Observations (c) and (d), as well as formula (\ref{eq:qr:simultaneous:t}), led us to the \ac{conclusion }%ac
      that the smaller the ratio between the sizes of the inner and outer blocks is, the faster convergence we can expect. We show this effect in Figure \ref{fig:4}, where the ratios $t/b$ are $0.3=3/10=6/20=15/50$, $0.5=5/10=10/20=25/50$ and $0.7=7/10=14/20=35/50$.
\end{enumerate}

\begin{figure}[htb]
\centering
\includegraphics[bb=0.0 0.0 792.0 612.0, scale=0.4]{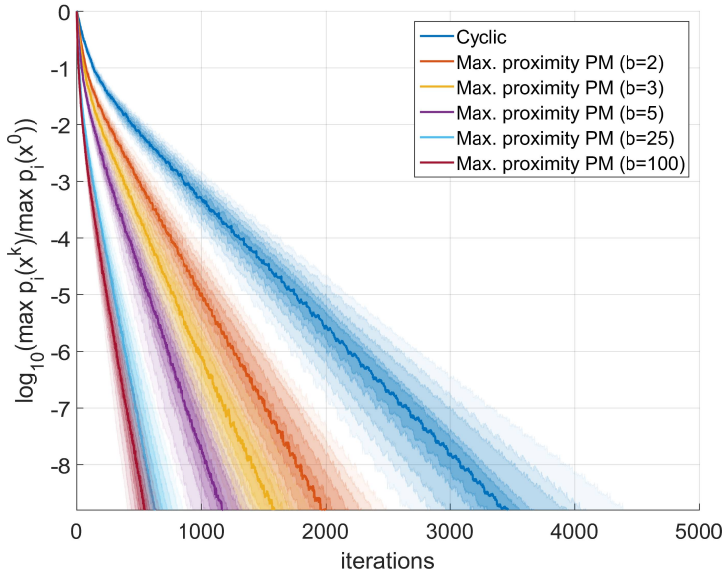}
\caption{Maximum proximity projection methods over the cyclic outer block $J_{[k]}$ of size $b=2,3,5,10$ and $25$. For the cyclic algorithm, $b=1$. A bold line indicates the median computed for (\ref{eq:plotMaxProx}). The ribbon plot represents the concentration of order 10, 20, 30, 40 and 50\% around the median.}
\label{fig:1}
\end{figure}

\begin{figure}[htb]
%\vspace{-5em}
\centering
\includegraphics[bb=0.0 0.0 792.0 612.0, scale=0.4]{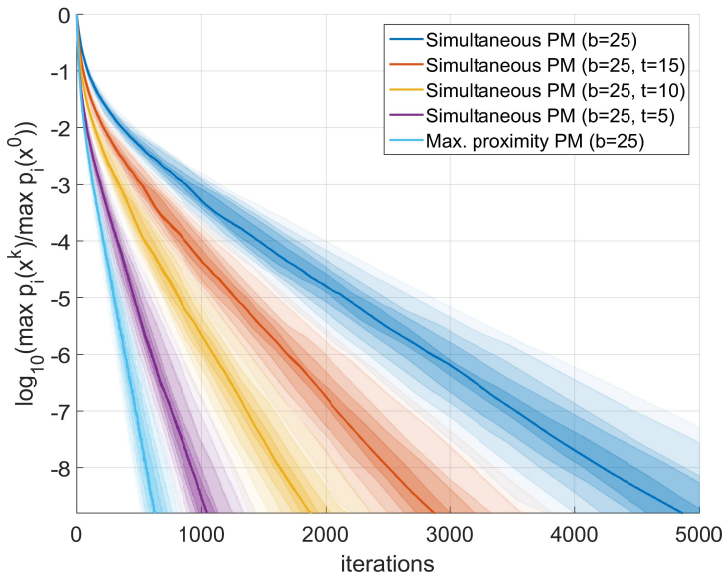}
\caption{Simultaneous projection methods over the cyclic outer block $J_{[k]}$ of size $b=25$. Indices inside the inner block $I_k$ are the first $t$ smallest indices with the largest proximity, where $t=15, 10$ and $5$. For the simultaneous method, $t=25$, whereas for the maximum proximity algorithm, $t=1$. Bold lines and ribbons are the same as in Figure \ref{fig:1}.}
\label{fig:2}
\end{figure}

\begin{figure}[htb]
\centering
\includegraphics[bb=0.0 0.0 792.0 612.0, scale=0.4]{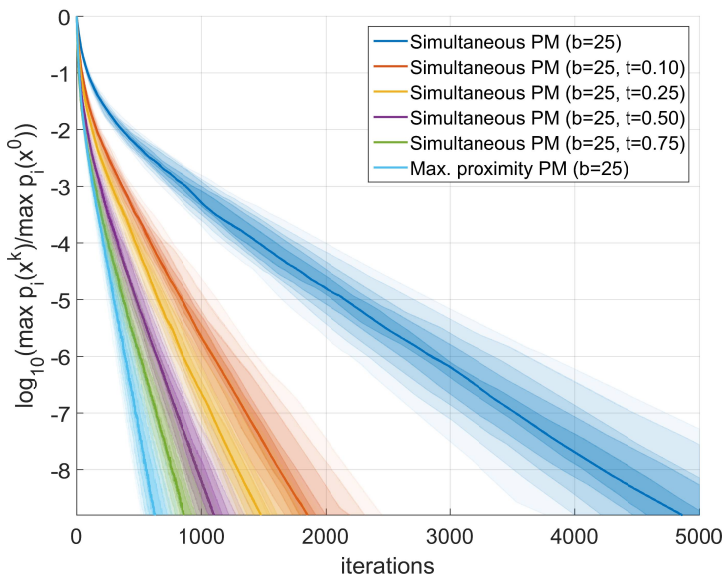}
\caption{Simultaneous projection methods over the cyclic outer block $J_{[k]}$ of the size $b=25$. Indices inside the inner block $I_k$ are those for which $p_i(x^k)\geq t \max_{j\in J_k}p_j(x^k)$, where $t=0.1,\ 0.25,\ 0.5$ and $0.75$. For the pure simultaneous method, $t=0$, whereas for the maximum proximity variant, $t$ could be considered to be $1$, while assuming that the maximum is uniquely attained. Bold lines and ribbons are the same as in Figure \ref{fig:1}.}
\label{fig:3}
\end{figure}

\begin{figure}[htb]
\centering
\includegraphics[bb=0.0 0.0 792.0 612.0, scale=0.4]{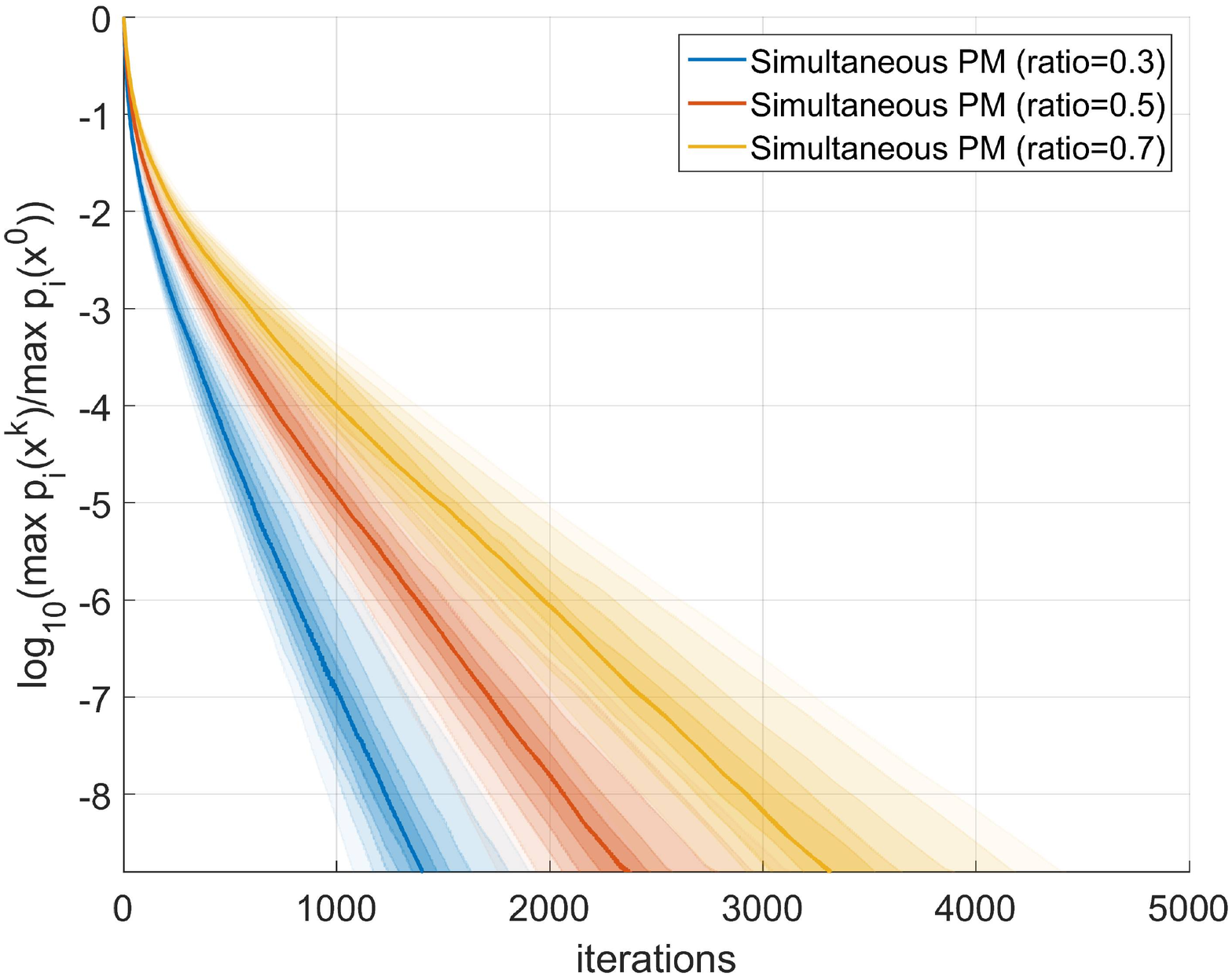}
\caption{Simultaneous projection methods over the cyclic outer block $J_{[k]}$, where the ratios $t/b$ between the sizes of the inner ($t$) and outer ($b$) blocks are $0.3=3/10=6/20=15/50$, $0.5=5/10=10/20=25/50$ and $0.7=7/10=14/20=35/50$. Bold lines and ribbons are the same as in Figure \ref{fig:1}.}
\label{fig:4}
\end{figure}

\section*{Acknowledgments} We are grateful to two anonymous referees for all their comments
and remarks which helped us to improve our manuscript.

\bibliographystyle{siamplain}
\bibliography{references}
\end{document}